\newcommand{\R}{\mathbb R}
\newcommand{\N}{\mathbb N}
\newcommand{\C}{\mathbb C}
\def\1{{\mathchoice {\rm 1\mskip-4mu l} {\rm 1\mskip-4mu l}
{\rm 1\mskip-4.5mu l} {\rm 1\mskip-5mu l}}}
\newtheorem{theorem}{{\bf{\small T}}{\scriptsize HEOREM}}[section]
\newtheorem{corollary}{{\bf{\small C}{\scriptsize OROLLARY}}}[section]
\newtheorem{proposition}{{\bf{\small P}{\scriptsize ROPOSITION}}}[section]
\newtheorem{lemma}{{\bf{\small L}{\scriptsize EMMA}}}[section]
\newtheorem{remark}{{\bf{\small R}{\scriptsize EMARK}}}[section]
\newtheorem{definition}{{\bf{\small D}{\scriptsize EFINITION}}}[section]
\theoremstyle{definition}
\newtheorem{example}{{\bf{\small E}}{\scriptsize XAMPLE}}[section]
\newcommand{\beq}{\begin{eqnarray}}
\newcommand{\eeq}{\end{eqnarray}}
\newcommand{\ba}{\begin{align*}}
\newcommand{\ea}{\end{align*}}
\newcommand{\be}{\begin{equation}}
\newcommand{\ee}{\end{equation}}
\newcommand{\bl}{\begin{lemma}}
\newcommand{\el}{\end{lemma}}
\newcommand{\br}{\begin{remark}}
\newcommand{\er}{\end{remark}}
\newcommand{\bt}{\begin{theorem}}
\newcommand{\et}{\end{theorem}}
\newcommand{\bd}{\begin{definition}}
\newcommand{\ed}{\end{definition}}
\newcommand{\bp}{\begin{proposition}}
\newcommand{\ep}{\end{proposition}}
\newcommand{\bc}{\begin{corollary}}
\newcommand{\ec}{\end{corollary}}
\newcommand{\bi}{\begin{itemize}}
\newcommand{\ei}{\end{itemize}}
\newcommand{\ben}{\begin{enumerate}}
\newcommand{\een}{\end{enumerate}}
\newcommand{\SEP}{\text{\normalfont SEP}}
\newcommand{\ASEP}{\text{\normalfont ASEP}}
\newcommand{\eft}{\text{\normalfont \tiny left}}
\newcommand{\ight}{\text{\normalfont \tiny right}}
\newcommand{\cheap}{\text{\normalfont \tiny cheap}}
\newcommand{\s}{\text{\normalfont S}}
\newcommand*\pFq[6][8]{%
  \begingroup 
  \pFqmuskip=#1mu\relax
  \mathcode`\,=\string"8000
  \begingroup\lccode`\~=`\,
  \lowercase{\endgroup\let~}\pFqcomma
  {}_{#2}F_{#3}{\left[\genfrac..{0pt}{}{#4}{#5};#6\right]}%
  \endgroup
}
\newcommand{\pFqcomma}{\mskip\pFqmuskip}
\def\Ddots{\mathinner{\mkern1mu\raise\p@
		\vbox{\kern7\p@\hbox{.}}\mkern2mu
		\raise4\p@\hbox{.}\mkern2mu\raise7\p@\hbox{.}\mkern1mu}}
\begin{document}
\title{{\bf Stochastic duality and eigenfunctions}}
\author{Frank Redig, Federico Sau\\	
\small{Delft Institute of Applied Mathematics}\\
\small{Delft University of Technology}\\
{\small van Mourik Broekmanweg 6, 2628 XE}\\ \small{Delft, The Netherlands}\\
\medskip
\url{f.h.j.redig@tudelft.nl}, \url{f.sau@tudelft.nl}\\}

\maketitle
\begin{abstract}
	We start from the observation that, anytime two Markov generators share an eigenvalue, the function constructed from the product of the two eigenfunctions associated to this common eigenvalue is a duality function. We push further this observation and provide a full characterization of duality relations in terms of spectral decompositions of the generators for finite state space Markov processes. Moreover, we study and revisit some well-known instances of duality, such as Siegmund duality, and extract spectral information from it. Next, we use the same formalism to construct all duality functions  for some solvable examples, i.e., processes for which the eigenfunctions of the generator are explicitly known. 
	
\end{abstract}

\section{Introduction}

Stochastic duality is a technique to connect two Markov  processes via a so-called \emph{duality function}. This connection, interesting in its own right, turns out to be extremely useful when the  \emph{dual} process is more tractable than the original process.

Several applications of stochastic duality may be found  in the context of interacting particle systems \cite{liggett} as, for instance, in the study of hydrodynamic limits and fluctuations  \cite{corwinkpz}, \cite{demasi}, \cite{kipnislandim}, characterization of extremal measures \cite{liggett}, \cite{d1method}, derivation of Fourier law of transport  \cite{dualitytransport}, \cite{kmp} and correlation inequalities \cite{kurchan}. Other fields rich of applications are population genetics, where the coalescent process arises as a natural dual process  (see \cite{greven} and references therein) and branching-coalescing processes \cite{etheridge}.  Duality and related notions have already been used in the study of spectral gaps and convergence to stationarity by several authors, see e.g.\ \cite{choipatie}, \cite{diaconisfill}, \cite{fill}, \cite{miclo}, \cite{patie}.

Part of the research about stochastic duality deals with the problem of \emph{finding} and \emph{characterizing} duality functions relating two given Markov processes.
This means that, for a given pair of Markov generators, one wants to find all duality functions  or, alternatively, a basis of the linear space of duality functions. 
See, for instance, in this direction \cite{mohle} in the context of population genetics, while for particle systems the works  \cite{corwin}, \cite{carincipop}, \cite{chiara}, \cite{d1method}  for symmetric  and \cite{carinci}, \cite{carinci2}, \cite{schutz} for asymmetric processes. For Markov processes, algebraic constructions of duality relations for specific classes of models have also been provided (see e.g.\ \cite{corwin}, \cite{carinci}, \cite{chiara2}, \cite{wolter}, \cite{kuan}). 

In this paper we first show that, viewing a duality relation as a \emph{spectral} relation among the associated Markov generators,  duality functions can be obtained from linear combinations of products of eigenfunctions associated to a common eigenvalue. Secondly, we establish this connection   with the general aim of characterizing all possible dualities in terms of eigenfunctions and generalized eigenfunctions of the generators involved. To this purpose, our discussion mainly focuses on continuous-time finite-state Markov chains for which no reversibility is assumed but canonical eigendecompositions of Jordan-type of the generators are available. 

We emphasize that this connection between duality and eigenfunctions goes both ways: not only eigenfunctions of a shared spectrum give rise to duality functions, but also the existence of duality relations carries information about the spectrum of the generators.
Here we can already see a clear distinction between the notion of   \emph{self-duality} and \emph{integrability}: knowing certain linear combinations of products of eigenfunctions (self-duality) rather than knowing the eigenfunctions themselves (integrability).  

The rest of the paper is organized as follows. In Section 2 we provide all preliminary notions of stochastic duality for continuous-time Markov chains. After an introductory study of self-duality and duality in the reversible setting in Sections \ref{section self-dual rev} and \ref{section dual rev}, in Section \ref{section nonreversibility}, via Jordan canonical decompositions, we make precise to which extent spectrum and eigenstructure of  generators in duality are shared. In fact, the assumed orthonormality of the eigenfunctions in Sections \ref{section self-dual rev} and \ref{section dual rev} has the only role of simplifying the exposition at a first reading. There, products of orthonormal eigenfunctions are a natural tensor basis w.r.t.\ which express duality functions; this fact allows a direct description of the linear subspace of duality functions in terms of this tensor basis.
In Section \ref{section nonreversibility}, we show how, by dropping reversibility of the generators  and thus orthonormality of the associated eigenfunctions, a tensor basis in terms of product of generalized eigenfunctions is always possible.

We  further investigate the connection between eigenfunctions and particular instances of dualities that typically appear in the context of interacting particle systems, see e.g.\  \cite{chiara}, \cite{d1method}, in Sections \ref{section self-dual rev} and  \ref{section dual rev}.
In Section \ref{subsection intertwining} we revisit the notion of \emph{intertwining} (see e.g.\ \cite{huilletmartinez}) in this setting and provide an application to the symmetric exclusion process in Section \ref{subsection exclusion}. In Section \ref{section Siegmund} we provide an alternative way of proving and characterizing Siegmund duality \cite{huilletmartinez}, \cite{siegmund} in the finite context.

\section{Setting and notation}\label{section setting}
Let $\Omega$ be a finite state space with cardinality $|\Omega|=n$. We consider an \emph{irreducible} continuous-time Markov process $\{X_t,\ t \geq 0\}$ on $\Omega$, with generator $L$ given by
\beq \nonumber
L f(x) &=& \sum_{y \in \Omega} \ell(x,y)(f(y)-f(x))\ ,
\eeq
where $f : \Omega \to \R$ is a real-valued function and $\ell : \Omega \times \Omega \to [0,+\infty)$ gives the transition rates. For $x \in \Omega$, we define the exit rate from $x \in \Omega$ as
\beq \nonumber
\ell(x) &=& \sum_{y \in \Omega \setminus \{x\}} \ell(x,y)\ .
\eeq
In the finite context we can identify $L$ with the matrix, still denoted by $L$, given by
\beq \nonumber
L(x,y)\ =\ \ell(x,y)\ \text{ for }\ x \neq y\ ,\quad L(x,x)\ =\ -\ell(x)\ .
\eeq
Given two state spaces $\Omega$, $\widehat \Omega$ of cardinalities $|\Omega|=n$, $|\widehat \Omega|=\widehat n$, and two \emph{Markov processes} with generators $L$, $\widehat L$, we say that they are \emph{dual} with \emph{duality function} $D : \widehat \Omega \times \Omega \to \R$ if, for all $x \in \Omega$ and $\widehat x \in \widehat \Omega$, we have
\beq \label{dualrel}
\widehat L_\eft D(\widehat x,x) &=& L_\ight D(\widehat x, x)\ ,
\eeq 
where \textquotedblleft left\textquotedblright, resp.\ \textquotedblleft right\textquotedblright, refers to action on the left, resp.\ right, variable. If the laws of the two processes coincide, we speak about \emph{self-duality}. The same notion in terms of matrix multiplication, where $D$ also denotes the matrix with entries $\{D(\widehat x,x),\ \widehat x \in \widehat \Omega,\ x \in \Omega \}$, is expressed as
\beq \nonumber
\sum_{\widehat y \in \widehat \Omega} \widehat L(\widehat x, \widehat y) D(\widehat y, x) &=& \sum_{y \in \Omega} L(x,y)D(\widehat x, y) \ ,
\eeq
or, shortly, as
\beq \label{dualrel matrix}
\widehat L D &=& D L^\mathsf{T}\ ,
\eeq
where the symbol $^\mathsf{T}$ denotes \emph{matrix  transposition}, i.e., for a matrix $A$, 
\beq \nonumber
(A^\mathsf{T})(x,y)&=& A(y,x)\ ,\quad x,y \in \Omega\ .
\eeq
More generally, we define two \emph{operators} $\widehat L$ and $L$ \emph{dual} with duality function $D$ if relation \eqref{dualrel}, or equivalently \eqref{dualrel matrix} in matrix notation, holds. 


%

\section{Self-duality from eigenfunctions:  reversible case}\label{section self-dual rev}
As in Section \ref{section setting}, let $\Omega$ be a finite set of cardinality $|\Omega|=n$, and let $L$ be a generator of an irreducible \emph{reversible} Markov process
on $\Omega$ w.r.t.\ the positive measure $\mu$. This measure then satisfies the detailed balance condition
\be\label{detbal}
\mu(x) L(x,y)= \mu(y) L(y,x)\ ,
\ee
for all $x,y\in \Omega$. This relation can be rewritten as a self-duality with self-duality function the so-called
\emph{cheap self-duality function}:
\be\label{dcheap}
D_{\cheap} (x,y)= \frac{\delta_{x,y}}{\mu(y)}\ .
\ee
The reversibility of $\mu$ implies that $L$ is self-adjoint in $L^2(\mu)$ and, as a consequence, there exists a basis $\{ u_1, \ldots, u_n\}$ of eigenfunctions of $L$ with
$u_1(x)=1/\sqrt{n}$ corresponding to eigenvalue zero and $\{ u_1, \ldots, u_n\}$ orthonormal, i.e.,
$\langle u_i, u_j\rangle_\mu=\delta_{i,j}$ where $\langle \cdot,\cdot \rangle_\mu$ denotes inner product in $L^2(\mu)$.
We denote by $\{\lambda_1,\ldots, \lambda_n \}$ the corresponding real eigenvalues with
\beq \nonumber
0\ =\ \lambda_1\ > \lambda_2\ \geq\ \ldots\ \geq\ \lambda_n\ .
\eeq
The following proposition then shows how to obtain and characterize self-duality functions in terms of this orthonormal system.
The last statement recovers an earlier result from \cite{chiara2}.
\begin{proposition}\label{simpelprop}
\ben[label={(\roman*)}]
\item For $a_1, a_2\ldots, a_n\in\R$, the function
\beq\label{tensdual}
D(x,y)&=&\sum_{i=1}^n a_i u_i(x) u_i(y)
\eeq
is a self-duality function.
\item Every self-duality function has a unique decomposition of the form
\beq\label{genform}
D(x,y) &=& \sum_{i,j : \lambda_i=\lambda_j} a_{ij} u_i(x) u_j(y)\ .
\eeq
\item If a function of the form $D(x,y) =  f(x)g(y)$ is a non-zero self-duality function, then $f$ and $g$ are eigenfunctions corresponding to the same eigenvalue.
\item The $L^2(\mu)$ inner product of self-duality functions produces self-duality functions, i.e.,
if $D$ and $D'$ are self-duality functions,
then
\beq \label{inner product}
\langle D(x,\cdot), D'(x',\cdot)\rangle_\mu &=& D''(x,x')
\eeq
defines a self-duality function $D''$.
\een
\end{proposition}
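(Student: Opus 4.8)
The plan for (iv) is to verify the self-duality relation \eqref{dualrel} for $D''$ by a direct computation, the only ingredients being bilinearity of $\langle\cdot,\cdot\rangle_\mu$ and the self-adjointness of $L$ in $L^2(\mu)$ provided by reversibility. First I would record the self-duality of $D$ as an identity of functions: since for each fixed $x$ the map $z\mapsto\sum_y L(x,y)D(y,z)$ coincides with $z\mapsto\sum_y L(z,y)D(x,y)=L[D(x,\cdot)](z)$, one has $\sum_y L(x,y)D(y,\cdot)=L[D(x,\cdot)]$ as functions, and likewise for $D'$.

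Then, fixing $x,x'\in\Omega$, I would compute
\begin{align*}
\sum_y L(x,y)\,D''(y,x')
&=\Big\langle \textstyle\sum_y L(x,y)\,D(y,\cdot),\,D'(x',\cdot)\Big\rangle_\mu
=\big\langle L[D(x,\cdot)],\,D'(x',\cdot)\big\rangle_\mu\\
&=\big\langle D(x,\cdot),\,L[D'(x',\cdot)]\big\rangle_\mu
=\sum_y L(x',y)\,\langle D(x,\cdot),D'(y,\cdot)\rangle_\mu
=\sum_y L(x',y)\,D''(x,y),
\end{align*}
where the middle equality uses self-adjointness of $L$ and the outer equalities use the self-duality of $D$ and of $D'$ respectively. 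The first and last expressions are exactly the two sides of \eqref{dualrel} with $\widehat L=L$, so $D''$ is a self-duality function.

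An equivalent matrix computation makes the role of reversibility explicit: writing $M=\mathrm{diag}(\mu)$ one checks $D''=D\,M\,(D')^{\mathsf T}$, and detailed balance \eqref{detbal} is precisely $L^{\mathsf T}M=ML$; combining this with $LD=DL^{\mathsf T}$ and its transposed counterpart $(D')^{\mathsf T}L^{\mathsf T}=L(D')^{\mathsf T}$ yields $LD''=D''L^{\mathsf T}$ in a couple of lines. Yet another route goes through parts (i)--(ii): expand $D$ and $D'$ in the orthonormal eigenbasis, contract the shared index using $\langle u_j,u_l\rangle_\mu=\delta_{j,l}$, and observe that ``$\lambda_i=\lambda_j$ and $\lambda_k=\lambda_j$'' forces $\lambda_i=\lambda_k$, so $D''$ again has the shape \eqref{genform}.

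I do not anticipate a genuine obstacle; the only thing to be careful about is the left/right-variable bookkeeping in \eqref{dualrel}, and making explicit that reversibility enters exactly once, as self-adjointness of $L$ (equivalently, detailed balance $L^{\mathsf T}M=ML$), without which the statement would in general fail.
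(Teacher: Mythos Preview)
Your argument for part \emph{(iv)} is correct, and your primary route is genuinely different from the paper's. The paper expands $D$ and $D'$ via the representation \eqref{genform} from part \emph{(ii)}, uses orthonormality $\langle u_j,u_l\rangle_\mu=\delta_{j,l}$ to collapse the sum over $y$, and observes that the resulting functions $u'_j=\sum_{i:\lambda_i=\lambda_j}a_{i,j}u_i$ are again eigenfunctions (or zero), so that $D''$ falls under part \emph{(i)}. In other words, the paper's proof of \emph{(iv)} passes through \emph{(i)} and \emph{(ii)} and the spectral decomposition.

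Your main argument bypasses the eigenbasis entirely: you use only the self-duality identities $LD=DL^{\mathsf T}$, $LD'=D'L^{\mathsf T}$ together with self-adjointness of $L$ in $L^2(\mu)$ (equivalently $L^{\mathsf T}M=ML$). This is more elementary and more robust---it shows that \emph{(iv)} holds without ever diagonalizing $L$, and would in fact go through for any operator self-adjoint in $L^2(\mu)$, not just those admitting an orthonormal eigenbasis. What the paper's route buys, on the other hand, is an explicit formula for $D''$ in the tensor basis, making the structure of the resulting self-duality function visible. Your third sketched alternative (expand in the eigenbasis and contract) is exactly the paper's argument.

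One remark: your proposal addresses only \emph{(iv)}; parts \emph{(i)}--\emph{(iii)} are stated but not argued. If that was intentional, fine; if not, the paper's proofs of \emph{(i)}--\emph{(iii)} are straightforward applications of the eigen-expansion and pose no difficulty.
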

\begin{proof}
For \emph{(i)}, by definition of eigenfunction $L u_i = \lambda_i u_i$ with $\lambda_i \in \R$, we obtain
\begin{multline*} \nonumber
L_\eft D(x,y)\ =\ \sum_{i=1}^n a_i Lu_i(x) u_i(y)\ =\ \sum_{i=1}^n a_i \lambda_i u_i(x) u_i(y)\\
\nonumber\ =\ \sum_{i=1}^n a_i u_i(x)\lambda_i u_i(y)\ =\ \sum_{i=1}^n a_i u_i(x) Lu_i(y)\ =\ L_\ight D(x,y)\ ,
\end{multline*}
hence \eqref{dualrel}.

For \emph{(ii)}, start by noticing that every function $D:\Omega\times\Omega\to\R$ can be written in a unique way as
\beq \nonumber
D(x,y) &=& \sum_{i,j=1}^n a_{i,j} u_i(x) u_j(y)\ ,
\eeq
Now using the duality relation \eqref{dualrel}, it follows that
\beq \nonumber
\sum_{i,j} a_{i,j} \lambda_i u_i(x) u_j(y) &=& \sum_{i,j} a_{i,j} \lambda_j u_i(x) u_j(y)\ ,
\eeq
which implies that, for all $i,j=1,\ldots,n$,
\beq \nonumber
a_{i,j}\lambda_i\ =\ a_{i,j} \lambda_j\ .
\eeq

For item \emph{(iii)}, first write
\beq \nonumber
f(x) g(y) &=&\sum_{i,j=1}^n a_{ij} u_i(x) u_j(y)\ .
\eeq
Then we find $a_{ij} = \langle f, u_i \rangle_\mu \langle g, u_j\rangle_\mu=: \alpha_i \beta_j$. From self-duality we conclude, for all $i,j=1,\ldots, n$,
\beq \nonumber
\alpha_i \beta_j (\lambda_i-\lambda_j)&=&0\ .
\eeq
Now use that $f(x)g(y)$ is not identically zero to conclude that there exists $i$ with $\alpha_i\not=0$.
Then if $\lambda_j\not=\lambda_i$ we conclude $\beta_j=0$, which implies that $g$ is an eigenfunction
with eigenvalue $\lambda_i$. Because $g$ is not identically zero, we can reverse the argument and conclude.

For \emph{(iv)}, by exchanging the order of summations and using $\langle u_j, u_l \rangle_\mu = \delta_{j,l}$, the l.h.s.\ of \eqref{inner product} reads
\begin{small}
	\begin{multline*} \nonumber
	\sum_{y \in \Omega} D(x,y) D(x',y) \mu(y)\\ =\ \sum_{y \in \Omega} \left(\sum_{i,j: \lambda_i=\lambda_j} a_{i,j} u_i(x) u_j(y)\right) \left( \sum_{k,l: \lambda_k = \lambda_l} a_{k,l} u_k(x') u_l(y) \right) \mu(y)\\
	=\ \sum_{j=1}^n \left(\sum_{i: \lambda_i=\lambda_j} a_{i,j} u_i(x)\right) \left(\sum_{k: \lambda_k = \lambda_j} a_{k,j} u_k(x') \right)\ .
	\end{multline*}
\end{small}
By noting that, for all $j=1,\ldots, n$,  the function $u'_j = \sum_{i: \lambda_i=\lambda_j} a_{i,j} u_i$ is either vanishing or is an eigenfunction of $L$ associated to $\lambda_j$, the proof is concluded.

\end{proof}

In the next propositions we study particular instances of self-duality functions. More precisely, by using Proposition \ref{simpelprop}, we recover the
cheap self-duality function in \eqref{dcheap}, while in Proposition \ref{osf} we characterize \emph{orthogonal} self-duality functions (cf.\  \eqref{definition orthogonal}--\eqref{definition orthogonal 2} below).

\begin{proposition}[Cheap self-duality]\label{proposition cheaprev}
\begin{enumerate}[label={(\roman*)}]
	\item 
For the choice $a_1=a_2=\ldots =a_n=1$ in \eqref{tensdual}, we obtain the cheap self-duality function, i.e.,
\beq \label{cheaprev}
D_{\cheap}(x,y)\ =\ \frac{\delta_{x,y}}{\mu(y)}\ =\ \sum_{i=1}^n u_i(x) u_i(y)\ .
\eeq
\item  Conversely, if $\{v_1,\ldots, v_n \}$ is a basis of $L^2(\mu)$ and satisfies
\beq \label{v functions}
\sum_{i=1}^n v_i(x)v_i(y) &=& \frac{\delta_{x,y}}{\mu(y)}
\eeq
for all $x, y \in \Omega$,
then $\{v_1,\ldots, v_n \}$ is an orthonormal basis of $L^2(\mu)$.
\end{enumerate}
\end{proposition}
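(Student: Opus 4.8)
The plan is to prove the two parts in sequence, using Proposition \ref{simpelprop} for part (i) and a direct matrix argument for part (ii).

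For part (i), I would simply specialize the decomposition \eqref{tensdual} to $a_1=\cdots=a_n=1$ and compute $\sum_{i=1}^n u_i(x)u_i(y)$. The key observation is the resolution of the identity: since $\{u_1,\ldots,u_n\}$ is an orthonormal basis of $L^2(\mu)$, for any $f$ one has $f = \sum_i \langle f,u_i\rangle_\mu u_i$. Applying this to $f = D_{\cheap}(x,\cdot) = \delta_{x,\cdot}/\mu(\cdot)$ and using $\langle \delta_{x,\cdot}/\mu(\cdot), u_i\rangle_\mu = \sum_y \mu(y)\,\delta_{x,y}\mu(y)^{-1}u_i(y) = u_i(x)$, I get $\delta_{x,y}/\mu(y) = \sum_i u_i(x)u_i(y)$, which is \eqref{cheaprev}; that this is a self-duality function is immediate from Proposition \ref{simpelprop}(i).

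For part (ii), let $V$ be the $n\times n$ matrix with entries $V(x,i) = v_i(x)$, and let $M = \mathrm{diag}(\mu(x))$. The hypothesis \eqref{v functions} says precisely that $(VV^\mathsf{T})(x,y) = \delta_{x,y}/\mu(y)$, i.e.\ $VV^\mathsf{T} = M^{-1}$, equivalently $M^{1/2}V V^\mathsf{T} M^{1/2} = I$, so $M^{1/2}V$ is an orthogonal matrix (it is square since $\{v_i\}$ is a basis, hence $V$ is invertible). An orthogonal matrix also satisfies $(M^{1/2}V)^\mathsf{T}(M^{1/2}V) = I$, i.e.\ $V^\mathsf{T} M V = I$; reading off entry $(i,j)$ gives $\sum_x v_i(x)\mu(x)v_j(x) = \delta_{i,j}$, which is exactly $\langle v_i,v_j\rangle_\mu = \delta_{i,j}$. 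Hence $\{v_1,\ldots,v_n\}$ is orthonormal in $L^2(\mu)$.

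The only mild subtlety — and the one place where the "basis" hypothesis is actually used — is knowing that $V$ is invertible, so that one is entitled to pass from the one-sided identity $VV^\mathsf{T} = M^{-1}$ to the two-sided consequence $V^\mathsf{T} M V = I$; without squareness/invertibility, $\sum_i v_i(x)v_i(y) = \delta_{x,y}/\mu(y)$ alone would not force orthonormality. Otherwise the argument is just the elementary fact that a square matrix with a right inverse has that matrix as a two-sided inverse, applied to $M^{1/2}V$. I expect no real obstacle here; the proof is short once the statements are recast in matrix form.
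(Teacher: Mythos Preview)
Your argument for part (i) is essentially identical to the paper's: both reduce to the orthonormal expansion $f=\sum_i\langle f,u_i\rangle_\mu u_i$, the paper by testing against a generic $f$, you by expanding the specific function $\delta_{x,\cdot}/\mu(\cdot)$.

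For part (ii) your route is genuinely different. The paper argues that the hypothesis \eqref{v functions} yields the reproducing identity $f(x)=\sum_i v_i(x)\langle v_i,f\rangle_\mu$ for every $f$ (by swapping sums), and then implicitly uses that applying this to $f=v_j$ together with the basis property forces $\langle v_i,v_j\rangle_\mu=\delta_{i,j}$. Your matrix reformulation $VV^\mathsf{T}=M^{-1}\Rightarrow V^\mathsf{T}MV=I$ is cleaner and makes the passage from ``completeness'' to ``orthonormality'' fully explicit; it also clarifies exactly where invertibility enters. One small remark on your closing comment: in fact the basis hypothesis is redundant here, since $VV^\mathsf{T}=M^{-1}$ already has full rank $n$, which forces $V$ itself to be invertible; so \eqref{v functions} alone already implies orthonormality. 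Either way, both proofs are correct and short.
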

\begin{proof}
To show \eqref{cheaprev}, by the positivity of $\mu$, we need to show that, for all $f:\Omega\to\R$ and $x\in\Omega$,
\[
\sum_{y \in \Omega} \sum_{i=1}^n u_i(x) u_i(y) \mu(y) f(y)\ =\ f(x)\ .
\]
Now note, by interchanging the sum over $i$ with the sum over $y$, that the l.h.s.\ equals
\[
\sum_{i=1}^n u_i(x) \langle u_i, f\rangle_{\mu}\ =\ f(x)\ ,
\]
and hence we obtain \emph{(i)}.

For \emph{(ii)} we need to show that for all $f : \Omega \to \R$ and $x \in \Omega$
\beq \label{equality}
f(x)\ =\ \sum_{i=1}^n v_i(x) \langle v_i, f \rangle_\mu\ =\ \sum_{i=1}^n \sum_{y \in \Omega} v_i(x) v_i(y) f(y) \mu(y)\ .
\eeq
We conclude by interchanging the order of the two summations in the r.h.s.\ above and using \eqref{v functions}, we indeed obtain \eqref{equality}.
\end{proof}

Remark that the cheap self-duality function is the only, up to multiplicative constants, \emph{diagonal} self-duality, and that it  is \emph{orthogonal} in the sense that, for all $x, x' \in \Omega$,
\beq \label{definition orthogonal}
\langle D_{\cheap} (x,\cdot), D_{\cheap} (x', \cdot)\rangle_\mu &=& \delta_{x,x'}\  \langle D_{\cheap} (x,\cdot), D_{\cheap} (x, \cdot)\rangle_\mu\ ,
\eeq
and similarly, for all $y, y' \in \Omega$,
\beq \label{definition orthogonal 2}
\langle D_{\cheap} (\cdot, y), D_{\cheap} (\cdot, y')\rangle_\mu &=& \delta_{y,y'}\  \langle D_{\cheap} (\cdot,y), D_{\cheap} ( \cdot,y)\rangle_\mu\ .
\eeq
The next proposition shows how to find \emph{all} orthogonal self-duality functions.
\begin{proposition}[Orthogonal self-duality]\label{osf}
\ben[label={(\roman*)}]
\item If $\{\tilde u_1, \ldots, \tilde u_n\}$ is an orthonormal system  in $L^2(\mu)$ of eigenfunctions of $L$, corresponding
to the same eigenvalues $\{\lambda_1,  \ldots,\lambda_n\}$, then
\beq\label{orthodual}
D(x,y)&=& \sum_{i=1}^n  \tilde{u}_i(x) u_i(y)
\eeq
is an orthogonal self-duality function. More precisely, for all $x, x' \in \Omega$,
\beq \label{orthogonal selfdual}
\langle D(x, \cdot), D(x', \cdot)\rangle_\mu&=& \frac{\delta_{x,x'}}{\mu(x')}\ .
\eeq
\item The self-duality functions of the form \eqref{orthodual} are the only, up to a multiplicative factor,  orthogonal self-duality functions. 
\een
\end{proposition}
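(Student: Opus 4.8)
My plan is to handle the two items in turn, using the characterization from Proposition~\ref{simpelprop} together with the algebra of orthonormal eigenbases. For item~\emph{(i)}, I would first check that $D$ in \eqref{orthodual} is a self-duality function by applying Proposition~\ref{simpelprop}\emph{(iii)}-style reasoning, or more directly: since each $u_i$ is an eigenfunction with eigenvalue $\lambda_i$ and each $\tilde u_i$ is an eigenfunction with the \emph{same} eigenvalue $\lambda_i$, acting with $L$ on the left variable gives $\sum_i \lambda_i \tilde u_i(x) u_i(y)$, and acting on the right gives the same, so \eqref{dualrel} holds. Then I would compute the inner product \eqref{orthogonal selfdual} by expanding
\[
\langle D(x,\cdot), D(x',\cdot)\rangle_\mu = \sum_{i,j} \tilde u_i(x)\tilde u_j(x') \langle u_i, u_j\rangle_\mu = \sum_i \tilde u_i(x)\tilde u_i(x'),
\]
using orthonormality of $\{u_i\}$. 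The right-hand side is exactly the ``cheap'' kernel $\delta_{x,x'}/\mu(x')$ by Proposition~\ref{proposition cheaprev}\emph{(ii)} applied to the orthonormal system $\{\tilde u_i\}$ (or equivalently by Proposition~\ref{proposition cheaprev}\emph{(i)} since an orthonormal basis reproduces point evaluations). This gives both the orthogonality relation \eqref{definition orthogonal} and the explicit value.

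For item~\emph{(ii)}, the converse, I would start from an arbitrary self-duality function $D$ and use Proposition~\ref{simpelprop}\emph{(ii)} to write $D(x,y) = \sum_{i,j:\lambda_i=\lambda_j} a_{ij} u_i(x) u_j(y)$. Grouping by eigenspaces, write $D(x,y) = \sum_{k} \sum_{i,j \in I_k} a_{ij} u_i(x) u_j(y)$ where $I_k$ indexes an eigenspace. Now impose orthogonality \eqref{definition orthogonal}: computing $\langle D(x,\cdot), D(x',\cdot)\rangle_\mu = \sum_k \sum_{i,i' \in I_k}\big(\sum_{j\in I_k} a_{ij} a_{i'j}\big) u_i(x) u_{i'}(x')$, and requiring this to be a multiple of $\delta_{x,x'}/\mu(x') = \sum_i u_i(x) u_i(x')$, forces the Gram-type matrix $\sum_j a_{ij} a_{i'j}$ (restricted to each block $I_k$) to be a scalar multiple of the identity. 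In other words, within each eigenspace the matrix $A_k = (a_{ij})_{i,j\in I_k}$ satisfies $A_k A_k^\mathsf{T} = c\, \mathrm{Id}$, i.e., $c^{-1/2} A_k$ is orthogonal. Absorbing the scalar, this means each row of $A_k$ furnishes the coefficients of a function $\tilde u_i := \sum_{j \in I_k} a_{ij} u_j$, and $\{\tilde u_i : i \in I_k\}$ is (up to the common factor) an orthonormal system spanning the same eigenspace — hence eigenfunctions for $\lambda_k$. Reassembling over all $k$ gives $D(x,y) = \sum_i \tilde u_i(x) u_i(y)$ with $\{\tilde u_i\}$ orthonormal eigenfunctions with eigenvalues $\{\lambda_i\}$, which is the form \eqref{orthodual}.

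The main obstacle I anticipate is bookkeeping the block structure cleanly: degeneracies in the spectrum mean that ``$\lambda_i = \lambda_j$'' couples indices within eigenspaces, and one must be careful that the orthogonality constraint \eqref{definition orthogonal} only constrains the $A_k A_k^\mathsf{T}$ products and says nothing mixing different blocks (which is automatic since $\langle u_i, u_{i'}\rangle_\mu = 0$ across blocks). A secondary subtlety is that one should also verify — or note it follows symmetrically — that the second orthogonality relation \eqref{definition orthogonal 2} holds; but since $D(x,y) = \sum_i \tilde u_i(x) u_i(y)$ is symmetric in structure (swap the roles of $\{\tilde u_i\}$ and $\{u_i\}$), this is immediate once the form \eqref{orthodual} is established. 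I would also remark that the ``multiplicative factor'' ambiguity in \emph{(ii)} corresponds exactly to the scalar $c$ above, and that one may further rescale to normalize the value of $\langle D(x,\cdot), D(x,\cdot)\rangle_\mu$ to $1/\mu(x)$ as in \eqref{orthogonal selfdual}.
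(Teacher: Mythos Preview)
Your approach is essentially the paper's. For \emph{(i)} you expand the inner product directly using orthonormality of $\{u_i\}$ and then invoke Proposition~\ref{proposition cheaprev} on $\{\tilde u_i\}$; the paper does the same computation but tests the resulting kernel against $\tilde u_k$ before invoking Proposition~\ref{proposition cheaprev}. For \emph{(ii)} both you and the paper start from the general form of Proposition~\ref{simpelprop}\emph{(ii)}, compute $\langle D(x,\cdot),D(x',\cdot)\rangle_\mu$, and read off orthonormality of a derived family of eigenfunctions; your matrix formulation $A_kA_k^{\mathsf T}=c\,\mathrm{Id}$ is just a repackaging of the paper's conclusion that the functions $u'_j=\sum_{i:\lambda_i=\lambda_j}a_{ij}u_i$ are orthonormal.

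One bookkeeping slip to fix: with your definition $\tilde u_i := \sum_{j\in I_k} a_{ij} u_j$ (rows of $A$), the reassembly actually gives $D(x,y)=\sum_i u_i(x)\,\tilde u_i(y)$, not $\sum_i \tilde u_i(x)\,u_i(y)$. To land exactly on the form \eqref{orthodual} you should instead take columns, $\tilde u_j := \sum_{i\in I_k} a_{ij} u_i$ (this is the paper's $u'_j$), so that $D(x,y)=\sum_j \tilde u_j(x)\,u_j(y)$; orthonormality of this family follows since $A_kA_k^{\mathsf T}=c\,\mathrm{Id}$ is equivalent to $A_k^{\mathsf T}A_k=c\,\mathrm{Id}$ for the square blocks, giving $\langle \tilde u_j,\tilde u_{j'}\rangle_\mu=(A^{\mathsf T}A)_{jj'}=c\,\delta_{j,j'}$.
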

\begin{proof}
For \emph{(i)}, we compute, for all $k=1,\ldots, n$ and $x \in \Omega$, the following quantity
\beq \nonumber
\sum_{x' \in \Omega} \langle D(x,\cdot), D(x',\cdot) \rangle_\mu \tilde u_k(x') \mu(x')\ .
\eeq
By $\langle u_i, u_j \rangle_\mu = \langle \tilde u_i, \tilde u_j \rangle_\mu = \delta_{i,j}$, the line above rewrites as follows:
\beq \nonumber
&&\sum_{x' \in \Omega} \sum_{y \in \Omega} \left(\sum_{i=1}^n \tilde u_i(x) u_i(y) \right)\left(\sum_{j=1}^n \tilde u_j(x') u_j(y) \right) \mu(y) \tilde u_k(x') \mu(x') \\
\nonumber
&=& \sum_{i=1}^n \sum_{j=1}^n \tilde u_i(x) \left(\sum_{y \in \Omega} u_i(y) u_j(y) \mu(y) \right) \left(\sum_{x' \in \Omega} \tilde u_j(x') \tilde u_k(x') \mu(x') \right)\\
\nonumber
&=& \sum_{i=1}^n \sum_{j=1}^n \tilde u_i(x) \delta_{i,j} \delta_{j,k}\ =\ \tilde u_k(x)\ .
\eeq
This together with Proposition \ref{proposition cheaprev} concludes the proof of part \emph{(i)}.

For \emph{(ii)}, by starting from a general self-duality function
\beq \nonumber
D(x,y) &=& \sum_{i,j:\lambda_i=\lambda_j} a_{i,j} u_i(x) u_j(y)\ ,
\eeq
the l.h.s.\ of \eqref{orthogonal selfdual} rewrites  as 
\beq \nonumber
\sum_{j =1}^n u'_j(x) u'_j(x')\ ,
\eeq
where $\{u'_1, \ldots, u'_n \}$ is  defined as 
\beq \nonumber
u'_j(x) &=& \sum_{i: \lambda_i=\lambda_j} a_{i,j} u_i(x)\ .
\eeq
By remarking that either $u'_j = 0$ or $u'_j$ is an eigenfunction of $L$ associated to $\lambda_j$ and applying
  Proposition \ref{proposition cheaprev},  we have  that
\beq \nonumber
\langle u'_i, u'_j \rangle_\mu &=& \delta_{i,j}\ ,
\eeq
and that the self-duality function $D$ has the form \eqref{orthodual} with $\tilde u_i = u'_i$.
\end{proof}

\section{Duality from eigenfunctions:  reversible case}\label{section dual rev}
Now we consider two generators $L$, $\widehat{L}$ on the same finite state space $\Omega$ with reversible measures $\mu$, $\widehat{\mu}$ respectively, 
and orthonormal systems of eigenfunctions $\{u_1,\ldots, u_n\}$, $\{\widehat{u}_1,\ldots, \widehat{u}_n\}$ corresponding to the {\em same} real eigenvalues
$\{\lambda_1,\ldots, \lambda_n\}$, i.e., we assume that $L$ and $\widehat L$ are self-adjoint in $L^2(\mu)$, resp.\ in $L^2(\widehat \mu)$, and that they are iso-spectral. 

In what follows we state - without proofs - analogous relations between duality functions and orthonormal systems of eigenfunctions of $L$ and $\widehat L$.
\begin{proposition} \label{proposition duality rev}
\ben[label={(\roman*)}]
\item For $a_1,\ldots, a_n\in\R$ the function
\beq \nonumber
D(\widehat x,x)&=&\sum_{i=1}^n a_i \widehat{u}_i(\widehat x) u_i(x)
\eeq
is a duality function for duality between $\hat{L}$ and $L$.
\item Every duality function has  a unique decomposition  of the form
\beq \nonumber
D(\widehat x,x)&=& \sum_{i,j: \lambda_i=\lambda_j} a_{ij} \widehat{u}_i(\widehat x) u_j(x)\ .
\eeq
\item If a function of the form $D(\widehat x ,x)= f(\widehat x )g(x)$ is a non-zero duality function, then
$f$  and $g$ are eigenfunctions of $\widehat L$, resp.\ $L$, corresponding to the same eigenvalue.
\item The $L^2(\mu)$ and $L^2(\widehat \mu)$ inner products of duality functions produce self-duality functions, i.e.,
if $D$ and $D'$ are duality functions,
then
\beq \nonumber
\langle D(\widehat x,\cdot), D'(\widehat x',\cdot)\rangle_\mu &=& \widehat D(\widehat x,\widehat x')
\eeq
defines a self-duality function $\widehat D$ for $\widehat L$, and similarly
\beq \nonumber
\langle D(\cdot, x), D'(\cdot, x') \rangle_{\widehat \mu} &=& \widetilde D(x,x')
\eeq
determines  a self-duality function $\widetilde D$ for $L$.

\een
\end{proposition}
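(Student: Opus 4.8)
The plan is to mirror, essentially verbatim, the argument used for Proposition \ref{simpelprop}, with the single bookkeeping change that now two orthonormal systems — $\{\widehat u_i\}$ in $L^2(\widehat\mu)$ and $\{u_i\}$ in $L^2(\mu)$ — enter, paired through the common eigenvalues $\{\lambda_i\}$. For part \emph{(i)}, I would simply apply $\widehat L$ to the left variable and $L$ to the right variable of $D(\widehat x,x)=\sum_i a_i\widehat u_i(\widehat x)u_i(x)$; since $\widehat L\widehat u_i=\lambda_i\widehat u_i$ and $Lu_i=\lambda_i u_i$ with $\lambda_i\in\R$, both sides collapse to $\sum_i a_i\lambda_i\widehat u_i(\widehat x)u_i(x)$, which is exactly the duality relation \eqref{dualrel}.

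For part \emph{(ii)}, the key observation is that $\{\widehat u_i\otimes u_j : 1\le i,j\le n\}$ is a basis of the space of real functions on $\Omega\times\Omega$, so every $D$ has a unique expansion $D(\widehat x,x)=\sum_{i,j}a_{ij}\widehat u_i(\widehat x)u_j(x)$; plugging this into \eqref{dualrel} and using the eigenvalue relations on each side gives $\sum_{i,j}a_{ij}\lambda_i\widehat u_i(\widehat x)u_j(x)=\sum_{i,j}a_{ij}\lambda_j\widehat u_i(\widehat x)u_j(x)$, and linear independence of the tensor basis forces $a_{ij}(\lambda_i-\lambda_j)=0$, i.e.\ $a_{ij}=0$ whenever $\lambda_i\ne\lambda_j$. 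Part \emph{(iii)} is then the rank-one specialization: writing $f(\widehat x)g(x)=\sum_{i,j}a_{ij}\widehat u_i(\widehat x)u_j(x)$ one reads off $a_{ij}=\langle f,\widehat u_i\rangle_{\widehat\mu}\,\langle g,u_j\rangle_\mu=:\alpha_i\beta_j$, and \emph{(ii)} gives $\alpha_i\beta_j(\lambda_i-\lambda_j)=0$; since $fg\not\equiv 0$ there is some $i$ with $\alpha_i\ne 0$, whence $\beta_j=0$ for every $j$ with $\lambda_j\ne\lambda_i$, so $g$ is an eigenfunction of $L$ for $\lambda_i$, and reversing the roles of $f$ and $g$ shows $f$ is an eigenfunction of $\widehat L$ for the same eigenvalue.

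For part \emph{(iv)} I would expand $D$ and $D'$ in the tensor basis, write $\langle D(\widehat x,\cdot),D'(\widehat x',\cdot)\rangle_\mu=\sum_{y\in\Omega}D(\widehat x,y)D'(\widehat x',y)\mu(y)$, interchange the order of summation, and use $\langle u_j,u_l\rangle_\mu=\delta_{j,l}$ to eliminate the $y$-summation; what remains is $\sum_{j}\big(\sum_{i:\lambda_i=\lambda_j}a_{ij}\widehat u_i(\widehat x)\big)\big(\sum_{k:\lambda_k=\lambda_j}a'_{kj}\widehat u_k(\widehat x')\big)$. Since each inner sum is either $0$ or an eigenfunction of $\widehat L$ associated to $\lambda_j$, this is a linear combination of products $\widehat u_i(\widehat x)\widehat u_k(\widehat x')$ with $\lambda_i=\lambda_k$, hence a self-duality function for $\widehat L$ by Proposition \ref{simpelprop}; the $L^2(\widehat\mu)$ statement follows in the same way, now collapsing the $\widehat x$-summation with $\langle\widehat u_i,\widehat u_k\rangle_{\widehat\mu}=\delta_{i,k}$ and invoking Proposition \ref{simpelprop} for $L$.

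There is no genuinely hard step — the whole proposition is a transcription of Proposition \ref{simpelprop}. The only points requiring care are that the iso-spectrality assumption is what makes the pairing $\widehat u_i\leftrightarrow u_i$ in \emph{(i)} and the index set $\{(i,j):\lambda_i=\lambda_j\}$ in \emph{(ii)} meaningful, and that the orthonormality of each system (equivalently, reversibility of $\mu$ and $\widehat\mu$) is precisely what makes the inner-product collapse in \emph{(iv)} yield products of eigenfunctions rather than something less structured; without it one would be forced into the generalized-eigenfunction formalism of Section \ref{section nonreversibility}.
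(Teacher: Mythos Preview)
Your proposal is correct and is exactly the approach the paper intends: the paper states this proposition explicitly \emph{without proof}, noting only that the arguments are analogous to those of Proposition~\ref{simpelprop}, and your transcription of that proof with the obvious bookkeeping changes (two orthonormal systems paired by the common eigenvalues) is precisely what that remark calls for.
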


\begin{proposition}[Orthogonal duality]
\ben[label={(\roman*)}]
\item If $\{\tilde u_1,\ldots,\tilde u_n\}$ is an orthonormal system in $L^2(\widehat \mu)$ of eigenfunctions of $\widehat L$ corresponding
to the same eigenvalues $\{\lambda_1,\ldots, \lambda_n\}$, then
\be
D(\widehat x,x)= \sum_{i=1}^n  \tilde{u}_i(\widehat x) u_i(x)
\ee
is an orthogonal duality function, i.e.,
\beq \nonumber
\langle D(\widehat x, \cdot), D(\widehat x', \cdot)\rangle_\mu&=& \frac{\delta_{\widehat x,\widehat x'}}{\widehat{\mu}(\widehat x')}
\eeq
and
\beq \nonumber
\langle D(\cdot, x), D(\cdot, x')\rangle_{\widehat{\mu}}&=& \frac{\delta_{x,x'}}{\mu(x')}\ .
\eeq
\item These are the only, up to multiplicative constants, orthogonal dualities between $\widehat{L}$ and $L$.
\een
\end{proposition}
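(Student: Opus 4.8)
The plan is to follow the same two-step scheme used for Propositions \ref{simpelprop} and \ref{osf}, now with the iso-spectral reversible pairs $(\widehat L,\widehat\mu)$ and $(L,\mu)$: use Proposition \ref{proposition duality rev} for the ``duality function'' part and Proposition \ref{proposition cheaprev} to recognise orthonormal bases through reproducing-kernel identities of the type \eqref{v functions}. For \emph{(i)}, since $\{\tilde u_i\}$ are eigenfunctions of $\widehat L$ and $\{u_i\}$ eigenfunctions of $L$ for the common eigenvalues $\{\lambda_i\}$, Proposition \ref{proposition duality rev}\emph{(i)} already yields that $D(\widehat x,x)=\sum_{i=1}^n\tilde u_i(\widehat x)u_i(x)$ is a duality function between $\widehat L$ and $L$. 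For the first orthogonality relation I would expand the inner product, interchange the two sums, and use $\langle u_i,u_j\rangle_\mu=\delta_{i,j}$ to obtain
\[
\langle D(\widehat x,\cdot),D(\widehat x',\cdot)\rangle_\mu=\sum_{i,j=1}^n\tilde u_i(\widehat x)\,\tilde u_j(\widehat x')\,\langle u_i,u_j\rangle_\mu=\sum_{i=1}^n\tilde u_i(\widehat x)\,\tilde u_i(\widehat x')\ ,
\]
and then invoke Proposition \ref{proposition cheaprev}\emph{(i)} for the reversible pair $(\widehat L,\widehat\mu)$ with orthonormal eigenbasis $\{\tilde u_i\}$ to identify the right-hand side with $\delta_{\widehat x,\widehat x'}/\widehat\mu(\widehat x')$. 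The second relation follows symmetrically: using $\langle\tilde u_i,\tilde u_j\rangle_{\widehat\mu}=\delta_{i,j}$ collapses $\langle D(\cdot,x),D(\cdot,x')\rangle_{\widehat\mu}$ to $\sum_i u_i(x)u_i(x')$, which equals $\delta_{x,x'}/\mu(x')$ by Proposition \ref{proposition cheaprev}\emph{(i)} applied to $(L,\mu)$.

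For \emph{(ii)}, let $D$ be an orthogonal duality function between $\widehat L$ and $L$, so that the Gram kernel $K(\widehat x,\widehat x'):=\langle D(\widehat x,\cdot),D(\widehat x',\cdot)\rangle_\mu$ is diagonal in the sense of \eqref{definition orthogonal}. By Proposition \ref{proposition duality rev}\emph{(iv)}, $K$ is a self-duality function for $\widehat L$; being diagonal, it is, by the remark following Proposition \ref{proposition cheaprev}, a constant multiple of $D_{\cheap}$, so after rescaling $D$ we may assume $K(\widehat x,\widehat x')=\delta_{\widehat x,\widehat x'}/\widehat\mu(\widehat x')$ --- this accounts for the ``up to a multiplicative constant''. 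By Proposition \ref{proposition duality rev}\emph{(ii)}, $D(\widehat x,x)=\sum_{i,j:\lambda_i=\lambda_j}a_{ij}\,\widehat u_i(\widehat x)\,u_j(x)$, which I would regroup as $D(\widehat x,x)=\sum_{j=1}^n u'_j(\widehat x)\,u_j(x)$ with $u'_j:=\sum_{i:\lambda_i=\lambda_j}a_{ij}\,\widehat u_i$, so that each $u'_j$ is either $0$ or an eigenfunction of $\widehat L$ with eigenvalue $\lambda_j$. Expanding $K$ exactly as in \emph{(i)} and using $\langle u_i,u_j\rangle_\mu=\delta_{i,j}$ gives $\sum_{j=1}^n u'_j(\widehat x)\,u'_j(\widehat x')=\delta_{\widehat x,\widehat x'}/\widehat\mu(\widehat x')$. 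Testing this identity against an arbitrary $f\colon\Omega\to\R$ yields $f=\sum_j\langle u'_j,f\rangle_{\widehat\mu}\,u'_j$, so $\{u'_1,\ldots,u'_n\}$ spans the $n$-dimensional space $L^2(\widehat\mu)$ and hence is a basis; Proposition \ref{proposition cheaprev}\emph{(ii)} then upgrades it to an orthonormal basis. In particular no $u'_j$ vanishes, so each $u'_j$ is a genuine eigenfunction of $\widehat L$ for $\lambda_j$, and putting $\tilde u_j:=u'_j$ exhibits $D$ in the form $\sum_i\tilde u_i(\widehat x)u_i(x)$ of part \emph{(i)}; the remaining orthogonality relation is then automatic by \emph{(i)}.

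The two inner-product computations are entirely routine. The step that needs care, exactly as in the proof of Proposition \ref{osf}\emph{(ii)}, is the converse direction: one must first normalise the orthogonal duality function --- whose Gram kernels are a priori only known to be diagonal --- to the exact reproducing-kernel form $\delta/\widehat\mu$, and then argue that this identity forces $\{u'_j\}$ to be a full orthonormal eigenbasis of $\widehat L$, rather than merely a pairwise-orthogonal family that could contain zero or repeated directions. Deducing the spanning property from the reproducing-kernel identity, which is what lets Proposition \ref{proposition cheaprev}\emph{(ii)} take over, is the crux of the argument.
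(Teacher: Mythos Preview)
The paper does not give a proof of this proposition: at the start of Section~\ref{section dual rev} it explicitly says the results are stated ``without proofs'' as analogues of the self-duality case. Your proposal is correct and is exactly the intended analogue of the proof of Proposition~\ref{osf}, so there is nothing to object to.

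Two minor remarks on how your write-up compares with the paper's treatment of the self-duality case. In part~\emph{(i)} your direct computation $\langle D(\widehat x,\cdot),D(\widehat x',\cdot)\rangle_\mu=\sum_i\tilde u_i(\widehat x)\tilde u_i(\widehat x')$ followed by Proposition~\ref{proposition cheaprev}\emph{(i)} is slightly more streamlined than the paper's argument for Proposition~\ref{osf}\emph{(i)}, which instead tests $\langle D(x,\cdot),D(x',\cdot)\rangle_\mu$ against each $\tilde u_k$; both reach the same conclusion. In part~\emph{(ii)} you are actually more careful than the paper's proof of Proposition~\ref{osf}\emph{(ii)}: you first use Proposition~\ref{proposition duality rev}\emph{(iv)} together with the uniqueness of diagonal self-dualities to normalise the Gram kernel, and you explicitly derive the spanning of $\{u'_j\}$ from the reproducing-kernel identity before invoking Proposition~\ref{proposition cheaprev}\emph{(ii)} (whose hypothesis requires a basis). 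The paper's analogous argument glosses over both points.
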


\section{Duality from eigenfunctions:  non-reversible case}\label{section nonreversibility}
Working in  the \emph{non-reversible} context, i.e., whenever there \emph{does not} exist a probability measure $\mu$ on $\Omega$ for which the generator $L$ is self-adjoint in $L^2(\mu)$, a spectral decomposition of the generator in terms of real non-positive eigenvalues and orthonormal real eigenfunctions is typically lost. In recent years, the study of the eigendecomposition of non-reversible generators has received an increasing attention \cite{choipatie}, \cite{choipatie2}, \cite{weber2}, \cite{patie}, \cite{weber} and duality-related notions have been introduced to relate spectral information of one process, typically a reversible one, to another, typically non-reversible \cite{fill}, \cite{miclo}. 

However, regardless of the spectral eigendecomposition of the generators, in principle interesting dualities can still be constructed from eigenfunctions, either real or complex, and generalized eigenfunctions of the generators involved. The key on which this relation builds up, in the finite context, is the \emph{Jordan canonical decomposition} of the generators. A relation
between duality and the Jordan canonical decomposition has already been used in the context of models of population dynamics in \cite{mohle}.

Below, before studying the most general result that exploits the Jordan form of the generators, we treat some special cases reminiscent of the previous sections. In the sequel, for a function $u : \Omega \to \C$, we denote by $u^\ast : \Omega \to \C$ its complex conjugate. 

\subsection{Duality from complex eigenfunctions}
A first feature that typically drops as soon as one moves to the non-reversible situation is the appearance of only real eigenvalues. Indeed, given a non-reversible generator $L$ of an irreducible Markov process on  $\Omega$, pairs of complex conjugates eigenvalues $\{\lambda, \lambda^\ast \}$ and eigenfunctions $\{u, u^\ast \}$ may arise as in the following example.
\begin{example}
	The continuous-time Markov chain on the state space $\Omega=\{1,2,3 \}$ and described by the generator $L$, which, viewed as a matrix, reads 
	\beq \nonumber
	L &=& \begin{pmatrix}
		-1&1 &0\\
		0& -1&1\\
		1&0&-1 
	\end{pmatrix}\ ,
	\eeq
	represents a basic example of this situation. Indeed, the Markov chain is irreducible, the eigenvalues $\{\lambda_1,\lambda_2, \lambda_3 \}$ are
	\beq \nonumber
	\lambda_1\ =\ 0\ ,\quad \lambda_2\ =\ \lambda_3^\ast\ =\ -\frac{3}{2} + i \frac{\sqrt{3}}{2}\ ,
	\eeq
	while the associated eigenfunctions $\{u_1,u_2, u_3 \}$ are, for $x \in \{1,2,3 \}$,
	\beq \nonumber
	u_1(x)\ =\ \frac{1}{\sqrt{3}}\ ,\quad u_2(x)\ =\ u_3^\ast(x)\ =\  e^{(i  \frac{2}{3} \pi) x}\ .
	\eeq
	\qedsymbol
\end{example}
Let us, thus, consider two irreducible non-reversible generators $L$, $\widehat L$ on the same state space $\Omega$. We investigate the situation in which there exist $\lambda \in \C \setminus \R$ and functions $u, \widehat u :  \Omega \to \C$ such that
\beq \label{eigenfct complex}
L u\ =\ \lambda u\ ,\quad \quad \widehat L \widehat u\ =\ \lambda \widehat u\ .
\eeq
Remark that, as $L$, $\widehat L$ are real operators, this implies that
\beq \label{eigenfct complex 2}
L u^*\ =\ \lambda^* u^*\ ,\quad \quad \widehat L \widehat u^*\ =\ \lambda^* \widehat u^*\ .
\eeq

A real duality function arising from a shared pair of complex eigenvalues is obtained in the following proposition. 
\begin{proposition}\label{proposition complex}
 For $a \in \R$, the  function 
\beq \nonumber
D(\widehat x, x) &=& a\widehat u(\widehat x) u(x) + a \widehat u^*(\widehat x) u^\ast(x) 
\eeq
takes values in $\R$ and
is a duality function between $\widehat L$ and $L$. 
\end{proposition}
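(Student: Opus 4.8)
The plan is to verify the duality relation \eqref{dualrel} directly by exploiting the linearity of the generators and the eigenfunction relations \eqref{eigenfct complex}--\eqref{eigenfct complex 2}. First I would note that the computation is essentially the same as in part \emph{(i)} of Proposition \ref{simpelprop}, carried out term by term: applying $\widehat L$ to the left variable of $D(\widehat x,x) = a\widehat u(\widehat x)u(x) + a\widehat u^\ast(\widehat x)u^\ast(x)$ gives $a\lambda\widehat u(\widehat x)u(x) + a\lambda^\ast\widehat u^\ast(\widehat x)u^\ast(x)$, while applying $L$ to the right variable gives $a\lambda\widehat u(\widehat x)u(x) + a\lambda^\ast\widehat u^\ast(\widehat x)u^\ast(x)$ as well, since in the first term the scalar $\lambda$ factors out of $Lu = \lambda u$ and in the second term $Lu^\ast = \lambda^\ast u^\ast$. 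The two expressions coincide, so $\widehat L_{\eft}D = L_{\ight}D$, which is \eqref{dualrel}.

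Second, I would address that the proposition asserts two things — that $D$ is a duality function \emph{and} that it is real-valued — so the reality claim needs a short separate argument. Here the point is that $D(\widehat x,x)$ is manifestly of the form $w + w^\ast$ with $w = a\widehat u(\widehat x)u(x)$ (using $a \in \R$), hence $D(\widehat x,x) = 2\,\mathrm{Re}\!\left(a\widehat u(\widehat x)u(x)\right) \in \R$ for every $\widehat x \in \widehat\Omega$ and $x \in \Omega$. One subtlety worth a sentence: the duality relation \eqref{dualrel} was introduced for real-valued $D$, but since $L$ and $\widehat L$ are real matrices, the relation $\widehat L_{\eft}D = L_{\ight}D$ makes sense and holds for complex-valued functions too, and the computation above is legitimately performed over $\C$ before one observes that the resulting function is in fact real; alternatively one verifies \eqref{eigenfct complex 2} as a consequence of \eqref{eigenfct complex} by conjugating, which the excerpt already records, and then the real/imaginary decomposition is automatic.

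There is essentially no obstacle here: the statement is a direct corollary of the linearity argument already used for Proposition \ref{simpelprop}\emph{(i)}, the only new ingredient being the bookkeeping of complex conjugates and the observation that $\lambda$ and $\lambda^\ast$ are distinct eigenvalues whose eigenspaces are paired by conjugation. If anything, the one place to be careful is to make sure the two summands are treated with the \emph{matching} eigenvalue — $\widehat u$ with $\lambda$ and $\widehat u^\ast$ with $\lambda^\ast$ — so that the left action and the right action produce the same scalar on each summand; mismatching them would break the identity. I would therefore present the proof in two short displays: one line establishing \eqref{dualrel} by the term-by-term computation, and one line establishing $D(\widehat x,x) \in \R$ via $D = 2\,\mathrm{Re}(a\widehat u(\widehat x)u(x))$.
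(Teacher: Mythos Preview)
Your proposal is correct and follows essentially the same approach as the paper's own proof: the paper first remarks that $D(\widehat x,x)\in\R$ is clear (your observation $D=w+w^\ast=2\,\mathrm{Re}(w)$ makes this explicit) and then verifies the duality relation by the same term-by-term application of \eqref{eigenfct complex}--\eqref{eigenfct complex 2}. There is no substantive difference.
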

\begin{proof}
	It is clear that $D(\widehat x,x)$ is in $\R$. 
	Then, by using \eqref{eigenfct complex} and \eqref{eigenfct complex 2}, we obtain
	\begin{multline*} \nonumber
	\widehat L_\eft D(\widehat x, x)\ =\ a (\widehat L \widehat u)(\widehat x) u(x) + a(\widehat L \widehat u^\ast)(\widehat x) u^\ast(x)\\ =\ a \lambda \widehat u(\widehat x) u(x) + a \lambda^\ast \widehat u^\ast(\widehat x)u^\ast(x)\ =\  a\widehat u(\widehat x) \lambda u(x) + a \widehat u^\ast(\widehat x) \lambda^\ast u^\ast(x)\\ =\ a \widehat u(\widehat x) (L u)(x) + a \widehat u^\ast(\widehat x) (L u^\ast)(x) \ =\ L_\ight D(\widehat x, x)\ .
	\end{multline*}
\end{proof}
\subsection{Duality from generalized eigenfunctions}
A second feature that may be lacking is the existence of a linear independent system of eigenfunctions. However, if $L$ is an irreducible non-reversible generator on the state space $\Omega$ with real non-negative eigenvalues $\{\lambda_1,\ldots, \lambda_n \}$, there always exists a linearly independent system of so-called \emph{generalized eigenfunctions}, i.e., for each eigenvalue $\lambda_i$, there exists a set of linearly independent functions $\{u_i^{(1)},\ldots, u_i^{(m_i)} \}$ such that $m_i \leq n$, 
\beq \nonumber
L u_i^{(1)} &=& \lambda_i u_i^{(1)}
\eeq
and, for $1 < k \leq m_i$,
\beq \nonumber
L u_i^{(k)} = \lambda_i u_i^{(k)} + u_i^{(k-1)}\ .
\eeq 
We refer to $u_i^{(k)}$ as the \emph{$k$-th order generalized eigenfunction} associated to $\lambda_i$.
Moreover, if $\lambda_i \neq \lambda_j$, then the set $\{u_i^{(1)},\ldots, u_i^{(m_i)},u_j^{(1)},\ldots, u_j^{(m_j)} \}$ is linearly independent and any arbitrary function $f : \Omega \to \R$ can be written as linear combination of functions in $\{u_i^{(k)},\ i=1,\ldots, n;\ k = 1,\ldots, m_i \}$. 

\begin{example}
The irreducible generator $L$ on the state space $\Omega=\{1,2,3,4 \}$ given by
\beq \nonumber
L &=& \begin{pmatrix}
	-\frac{1}{2}&\frac{1}{2} &0&0\\
	0& -1&\frac{1}{2}&\frac{1}{2}\\
	\frac{1}{2}&0&-1&\frac{1}{2}\\
	0&\frac{1}{2}&\frac{1}{2}&-1
\end{pmatrix}\ ,
\eeq
represents a basic example of this situation. Indeed, the eigenvalue $\lambda=-1$ has $u^{(1)}$ given by
\beq \nonumber
u^{(1)}(x)\ =\ \frac{(-1)^x}{2}\ ,\quad x \in \{1,2,3,4 \}\ ,
\eeq
as eigenfunction and
\beq \nonumber
u^{(2)}(x)\ =\ \cos\left({\frac{\pi}{2} (x+1)}\right)\ ,\quad x \in \{1,2,3,4\}\ ,
\eeq
as a second order generalized eigenfunction, i.e.,
\beq \nonumber
L u^{(2)}\ =\ -u^{(2)} + u^{(1)}\ .
\eeq
\qedsymbol
\end{example}
In this situation, in case of two generators $L$, $\widehat L$ sharing a real eigenvalue $\lambda$ with associated generalized eigenfunctions $\{u^{(1)},\ldots, u^{(m)} \}$, $\{\widehat u^{(1)},\ldots, \widehat u^{(m)} \}$, the main idea is that a duality function is readily constructed from sums of products of generalized eigenfunctions whose order is, nevertheless, reversed. This connection is the content of the following proposition.
\begin{proposition}\label{proposition generalized eigenvectors}
The function
\beq \nonumber
D(\widehat x, x) &=& \sum_{k=1}^m \widehat u^{(k)}(\widehat x) u^{(m+1-k)}(x)
\eeq
is a duality function between $\widehat L$ and $L$.
\end{proposition}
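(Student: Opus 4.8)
The plan is to verify the duality relation \eqref{dualrel} by a direct computation that uses nothing but the defining recursions of the two Jordan chains. Writing $D(\widehat x,x)=\sum_{k=1}^m \widehat u^{(k)}(\widehat x)\,u^{(m+1-k)}(x)$, I would apply $\widehat L$ to the left variable, apply $L$ to the right variable, and compare the two resulting functions. Since $\widehat L$ acts only on the first coordinate and $D$ is a sum of products that separate the two variables, the left-hand side of \eqref{dualrel} equals $\sum_{k=1}^m (\widehat L\widehat u^{(k)})(\widehat x)\,u^{(m+1-k)}(x)$ and the right-hand side equals $\sum_{k=1}^m \widehat u^{(k)}(\widehat x)\,(Lu^{(m+1-k)})(x)$.

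For the left-hand side I would insert $\widehat L\widehat u^{(1)}=\lambda\widehat u^{(1)}$ and $\widehat L\widehat u^{(k)}=\lambda\widehat u^{(k)}+\widehat u^{(k-1)}$ for $2\le k\le m$, obtaining
\[
\widehat L_\eft D(\widehat x,x)=\lambda D(\widehat x,x)+\sum_{k=2}^m \widehat u^{(k-1)}(\widehat x)\,u^{(m+1-k)}(x),
\]
and the substitution $j=k-1$ rewrites the remainder as $\sum_{j=1}^{m-1}\widehat u^{(j)}(\widehat x)\,u^{(m-j)}(x)$. For the right-hand side I would set $\ell=m+1-k$ and use $Lu^{(1)}=\lambda u^{(1)}$ and $Lu^{(\ell)}=\lambda u^{(\ell)}+u^{(\ell-1)}$ for $\ell\ge 2$, which gives
\[
L_\ight D(\widehat x,x)=\lambda D(\widehat x,x)+\sum_{k=1}^{m-1}\widehat u^{(k)}(\widehat x)\,u^{(m-k)}(x),
\]
the lowering term being present exactly when $m+1-k\ge 2$, i.e.\ $k\le m-1$. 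The two remainders are literally the same sum, so $\widehat L_\eft D=L_\ight D$, which is \eqref{dualrel}. (As $\lambda$ is real the generalized eigenfunctions may be taken real, so $D$ is a bona fide real-valued duality function.)

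The computation is routine and I do not expect any substantial obstacle; the one point demanding care --- and the reason the superscripts are paired in \emph{reversed} order in the definition of $D$ --- is the index bookkeeping. Acting with $\widehat L$ lowers the superscript of the $\widehat u$-factor by one, while acting with $L$ lowers the superscript of the $u$-factor by one; because in every summand of $D$ these two superscripts add up to $m+1$, the two lowering operations generate identical families of cross terms. Pairing $\widehat u^{(k)}$ with $u^{(k)}$ instead would still match the $\lambda D$ contributions but not the lower-order ones, so the reversal of orders is essential, and I would highlight this in the write-up.
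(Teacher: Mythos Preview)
Your proposal is correct and follows essentially the same direct computation as the paper's own proof: both expand $\widehat L_\eft D$ and $L_\ight D$ via the Jordan-chain recursions and match the resulting $\lambda D$ and lower-order remainder sums after the same index shift. Your additional remark explaining \emph{why} the reversed pairing of superscripts is forced is a nice touch not spelled out in the paper.
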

\begin{proof}
By  using the definition of $k$-th order generalized eigenfunction, we obtain
\begin{small}
\begin{multline*} \nonumber
\widehat L_\eft D(\widehat x, x)\ =\ \sum_{k=1}^m (\widehat L \widehat u^{(k)})(\widehat x) u^{(m+1-k)}(x)\\ =\ \sum_{k=1}^m \lambda \widehat u^{(k)}(\widehat x) u^{(m+1-k)} + \sum_{k=2}^m \widehat u^{(k-1)}(\widehat x) u^{(m+1-k)}(x)\\ \nonumber
=\ \sum_{k=1}^m \lambda \widehat u^{(k)}(\widehat x) u^{(m+1-k)} + \sum_{k=1}^{m-1} \widehat u^{(k)}(\widehat x) u^{(m-k)}(x)\\ =\ \sum_{k=1}^m \widehat u^{(k)}(\widehat x) (L u^{(m+1-k)})(x)\ \nonumber
=\ L_\ight D(\widehat x,x)\ .
\end{multline*}
\end{small}
\end{proof}

\subsection{Duality and the Jordan canonical decomposition: general case}
In this section we provide a general framework that allows us to cover all instances of duality encountered so far in the finite setting. The standard strategy of decomposing generators - viewed as matrices - into their Jordan canonical form builds a bridge between dualities and spectral information of the generators involved. In particular, this linear algebraic approach is useful for the problem of  \emph{existence} and \emph{characterization} of duality functions: on one side, the existence of a Jordan canonical decomposition for any generator leads, for instance, to the existence of self-dualities; on the other side,  dualities between generators carry  information about a common, at least partially, spectral structure of the generators.

Before stating the main result, we introduce some notation. Given a generator $L$ on the state space $\Omega$ with cardinality $|\Omega|=n$, $L$ is in \emph{Jordan canonical form} if it can be written as
\beq \nonumber
L &=& U J U^{-1}\ ,
\eeq
where $J \in \C^{n\times n}$ is the \emph{unique}, up to permutations,  \emph{Jordan matrix} \cite[Definition 3.1.1]{Horn} associated to $L$ and 
$U \in \C^{n\times n}$ is an invertible matrix. Recall that columns $\{u_1,\ldots, u_n \}$ of $U$ consists of (possibly generalized)  eigenfunctions of $L$, while the rows $\{w_1,\ldots, w_n \}$ of  $U^{-1}$ the (possibly generalized)  eigenfunctions of $L^\mathsf{T}$, chosen in such a way that
\beq \nonumber
\langle w_i, u_j \rangle &=& \sum_{x \in \Omega} w_i(x) u_i^\ast(x)\ =\ \delta_{i,j}\ .
\eeq
For all Jordan matrices $J \in \C^{n\times n}$ of the form 
\beq \nonumber
J&=&\begin{pmatrix}
	J_{m_1}(\lambda_1) & & \cdots&0\\
	& J_{m_2}(\lambda_n) & &\vdots\\
	\vdots&&\ddots&\\
	0&\cdots&&J_{m_k}(\lambda_k)
\end{pmatrix}\ ,
\eeq
with $m_1 +\ldots + m_k = n$ and \emph{Jordan blocks} $J_m(\lambda)$ of size $m$ associated to eigenvalue $\lambda \in \C$, we define the matrix $B_J \in \R^{n\times n}$ as follows
\beq \nonumber
B_J &=&
\begin{pmatrix}
	H_{m_1} & & \cdots&0\\
	& H_{m_2} & &\vdots\\
	\vdots&&\ddots&\\
	0&\cdots&&H_{m_k}\ 
\end{pmatrix}\ ,
\eeq
where, for all $m \in \N$, the matrix $H_m \in \R^{m\times m}$ is defined as
\beq \nonumber
H_m &=& \begin{pmatrix}
	0&\cdots &&1\\
	\vdots& &\Ddots &\\
	&\Ddots&&\vdots\\
	1&&\cdots&0\ 
\end{pmatrix}\ ,
\eeq
i.e., in such a way that 
$B_J^\mathsf{T} = B_J^{-1} =B_J$
and
$J B_J = B_J J^\mathsf{T}$.
Moreover, we say that two matrices $L \in \R^{n \times n}$, $\widehat L \in \R^{\widehat n \times \widehat n}$ are \emph{$r$-similar} for some $r =1,\ldots, \min\{n, \widehat n\}$ if there exist Jordan canonical forms
\beq \label{jordan forms}
L\ =\ U J U^{-1}\ ,\quad\quad \widehat L\ =\ \widehat U \widehat J \widehat U^{-1}\ ,
\eeq
matrices $S_r \in \R^{\widehat n \times n}$ and $I_r \in \R^{r\times r}$ of the form
\beq \nonumber
S_r\ =\ \begin{pmatrix}
	I_r &\mathbf{0}\\
	\mathbf{0}&\mathbf{0}
	\end{pmatrix}\ ,\quad I_r\ =\ \begin{pmatrix}
	1&  \cdots&0\\
	\vdots&\ddots&\vdots\\
	0&\cdots&1
\end{pmatrix}\ ,
\eeq
and permutation matrices $\widehat P\in \R^{\widehat n \times \widehat n}$ and $P \in \R^{n \times n}$
such that
\beq \nonumber
T_r &=& \widehat P S_r P
\eeq
and
\beq \label{r-similar}
\widehat J T_r &=& T_r J\ .
\eeq
Of course, if two matrices are $r$-similar, then they are necessarily $r'$-similar, for all $r'=1,\ldots, r$ and if $r=n= \widehat n$ then we simply say that they are \emph{similar}.

In the following theorem we establish a general connection between duality relations and Jordan canonical forms for generators $L$, $\widehat L$.

\begin{theorem}\label{theorem jordan}
	The following statements are equivalent:
	\ben[label={(\roman*)}]	\item There exists a duality function $D(\widehat x, x)$ of rank $r$ between $\widehat L$ and $L$\ .
	 \item $L$ and $\widehat L$ are \emph{$r$-similar}\ .	 
	\een
	If either condition holds, any duality function is of the form
	\beq \label{choice D}
	D &=& \widehat U  T_r B_J U^\mathsf{T}\ .
	\eeq
	In particular if $L=\widehat L$, for any $r=1,\ldots, n$, there always exists a self-duality function $D$ of  rank $r$ and it must be of the form \eqref{choice D}.
\end{theorem}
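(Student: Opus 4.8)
The plan is to prove the equivalence of (i) and (ii) by translating the duality relation $\widehat L D = D L^\mathsf{T}$ into a statement about Jordan forms, using the key algebraic facts $B_J^\mathsf{T} = B_J^{-1} = B_J$ and $J B_J = B_J J^\mathsf{T}$ recorded just before the theorem. First I would plug the Jordan canonical forms \eqref{jordan forms} into \eqref{dualrel matrix}. Writing $L = U J U^{-1}$ and $\widehat L = \widehat U \widehat J \widehat U^{-1}$, the relation $\widehat L D = D L^\mathsf{T}$ becomes $\widehat U \widehat J \widehat U^{-1} D = D (U^{-1})^\mathsf{T} J^\mathsf{T} U^\mathsf{T}$. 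Introducing the change of variables $M := \widehat U^{-1} D (U^\mathsf{T})^{-1} = \widehat U^{-1} D (U^{-1})^\mathsf{T}$, this reads $\widehat J M = M J^\mathsf{T}$; conversely, every solution $M$ of $\widehat J M = M J^\mathsf{T}$ gives a duality function $D = \widehat U M U^\mathsf{T}$, and $\operatorname{rank} D = \operatorname{rank} M$ since $\widehat U$, $U$ are invertible. So the problem reduces entirely to: classify, up to the rank constraint, the matrices $M$ intertwining $\widehat J$ and $J^\mathsf{T}$.

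Next I would make the substitution $M = N B_J$ (equivalently $N = M B_J$, using $B_J^{-1} = B_J$), so that, using $J^\mathsf{T} = B_J J B_J$, the equation $\widehat J M = M J^\mathsf{T}$ becomes $\widehat J N B_J = N B_J \cdot B_J J B_J = N J B_J$, i.e.\ $\widehat J N = N J$. Thus duality functions are in bijection with solutions $N$ of the \emph{commutation-type} relation $\widehat J N = N J$ between two Jordan matrices, via $D = \widehat U N B_J U^\mathsf{T}$. Now I would invoke the standard theory of Sylvester-type equations $\widehat J N = N J$ for matrices in Jordan form: the solution space decomposes blockwise over pairs of Jordan blocks $(J_{\widehat m_a}(\widehat\lambda_a), J_{m_b}(\lambda_b))$, and a block pair contributes nonzero solutions only when $\widehat\lambda_a = \lambda_b$, in which case the solutions are (upper-triangular) Toeplitz rectangular blocks of a prescribed shape. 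The matrices $N$ of a given rank $r$ that admit the specific form $T_r = \widehat P S_r P$ are exactly those obtained by selecting, for $r$ of the matched eigenvalue positions, the "identity-like" Toeplitz generators; one then checks that any rank-$r$ solution $N$ can be brought to the normal form $T_r$ by permuting Jordan blocks (which permutes $U$, $\widehat U$ accordingly, absorbing the permutations into $P$, $\widehat P$) and that $\widehat J T_r = T_r J$ is precisely condition \eqref{r-similar}. This simultaneously establishes (i)$\Leftrightarrow$(ii) and the normal form \eqref{choice D}: a rank-$r$ duality function exists iff $L$ and $\widehat L$ are $r$-similar, and then $D = \widehat U T_r B_J U^\mathsf{T}$.

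For the final sentence, I would specialize to $L = \widehat L$: take the \emph{same} Jordan decomposition $L = U J U^{-1}$ for both, so $\widehat J = J$, $\widehat U = U$, $\widehat n = n$, and observe that for any $r = 1,\ldots, n$ one may choose $T_r = S_r$ (take $P = \widehat P$ to be the permutation that moves $r$ chosen "first rows of blocks at distinct eigenvalue positions", or more simply reorder the Jordan blocks so that the top-left $r\times r$ corner is handled cleanly) — the point is that the identity $J B_J = B_J J^\mathsf{T}$ guarantees $D = U S_r B_J U^\mathsf{T}$ is a self-duality function of rank $r$, and the first part of the theorem forces every self-duality function to have the form \eqref{choice D}.

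The main obstacle I anticipate is the bookkeeping in the last step of the middle paragraph: showing that \emph{every} rank-$r$ solution $N$ of $\widehat J N = N J$ can be reduced, by permuting Jordan blocks on both sides, to the rigid shape $T_r = \widehat P S_r P$ with $I_r$ a genuine identity block. The subtlety is that a general solution within a matched pair of Jordan blocks is an upper-triangular Toeplitz band, not just a scalar multiple of a shifted identity, so one must argue that after choosing an appropriate basis (i.e.\ adjusting the generalized-eigenfunction bases inside $U$, $\widehat U$, which is allowed since the Jordan form is unique only up to such choices and permutations) the intertwiner collapses to the normal form — and that the rank is preserved under these adjustments. Handling non-square blocks (when $\widehat m_a \neq m_b$ at a shared eigenvalue) and verifying that $T_r B_J$ keeps its rectangular-identity shape through the conjugation are the fiddly points; everything else is routine linear algebra.
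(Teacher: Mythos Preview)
Your approach is essentially identical to the paper's: both reduce the duality relation to an equation $\widehat J N = N J$ between Jordan matrices via the substitution $N = \widehat U^{-1} D (U^{-1})^\mathsf{T} B_J$, and then argue about rank-$r$ solutions of this Sylvester-type equation. You have in fact been more explicit than the paper about the block-Toeplitz structure of the solutions and the need to absorb that freedom into the choice of generalized-eigenvector bases inside $U$, $\widehat U$; the paper's own proof is terse at exactly the step you flag as the main obstacle, simply asserting that from $\widehat J A = A J$ with $\operatorname{rank} A = r$ one concludes the existence of a permutation $P$ with $\widehat J S_r = S_r P J P^{-1}$.
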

\begin{proof}
We start with proving that \emph{(ii)} implies \emph{(i)}. By using the property of $r$-similarity \eqref{r-similar} with Jordan decompositions as in \eqref{jordan forms}, with the choice \eqref{choice D} of the candidate duality function $D$, we obtain
\beq \nonumber
\widehat L \widehat U T_r B_J U^\mathsf{T}\ =\ \widehat U \widehat J T_r B_J U^\mathsf{T}\ =\ \widehat U T_r J B_J U^\mathsf{T}\ =\ \widehat U T_r B_J J^\mathsf{T} U^\mathsf{T}\ =\ \widehat U T_r B_J U^\mathsf{T} L^\mathsf{T}\ ,
\eeq	
i.e., the duality relation \eqref{dualrel matrix} in matrix form. 

For the other implication, as the matrices $U$, $\widehat U$ in \eqref{jordan forms} and $B_J$ are invertible, the following chains of identities are equivalent:
\beq \nonumber
\widehat L D\ =\ D L^\mathsf{T} &\Longleftrightarrow& \widehat U \widehat J \widehat U^{-1} D\ =\ D (U^{-1})^\mathsf{T} J^\mathsf{T} U^\mathsf{T}\\
\nonumber
&\Longleftrightarrow& \widehat J \widehat U^{-1} D (U^{-1})^\mathsf{T}\ =\ \widehat U^{-1} D (U^{-1})^\mathsf{T} J^\mathsf{T}\\
\nonumber
&\Longleftrightarrow& \widehat J \widehat U^{-1} D (U^{-1})^\mathsf{T} B_J\ =\ \widehat U^{-1} D (U^{-1})^\mathsf{T} B_J J\ .
\eeq
Moreover, if $D$ has rank $r$, then $ \widehat U^{-1} D (U^{-1})^\mathsf{T} B_J$ must have rank $r$ as well. The last relation is of the form
\beq \nonumber
\widehat J A &=& A J\ ,
\eeq
where $A = \widehat U^{-1} D (U^{-1})^\mathsf{T} B_J$ is a matrix of rank $r$. Therefore, we conclude that there exists a permutation matrix $P \in \R^{n \times n}$ such that 
\beq \nonumber
\widehat J S_r  &=& S_r P J P^{-1}\ , 
\eeq
i.e., $L$ and $\widehat L$ are $r$-similar according to the Jordan canonical decompositions
\beq \nonumber
L\ =\ \widetilde U \widetilde J \widetilde U^{-1}\ ,\quad \quad \widehat  L\ =\ \widehat U \widehat J \widehat U^{-1}\ , 
\eeq
with $\widetilde U = U P^{-1}$ and $\widetilde J = P J P^{-1}$. 
\end{proof}
\begin{remark}
	\begin{enumerate}[label={(\alph*)}]
		\item In  words, the theorem above states that there exists a rank-$r$ duality matrix if and only if
		the generators $\widehat L$ and $L$ have $r$ eigenvalues (with multiplicities) in common with \textquotedblleft compatible\textquotedblright\ structure of eigenspaces. Additionally, equation  \eqref{choice D} provides the most general form of the duality function $D$ in terms of matrices $U$, $\widehat U$.  In particular, if $J$ is \emph{diagonal} (i.e., $B_J$ is the identity matrix) \emph{all} duality functions $D(\widehat x, x)$ of rank $r$ read as 
		\beq \nonumber
		D(\widehat x, x) &=& \sum_{i=1}^r a_i \widehat u_i(\widehat x) u_i(x)\ ,
		\eeq
		for $a_1,\ldots, a_n \in \R \setminus \{0\}$, given  $\{u_1,\ldots, u_n \}$, $\{\widehat u_1,\ldots, \widehat u_{\widehat n} \}$ are the columns of $U$, $\widehat U$, invertible matrices in the Jordan decompositions \eqref{jordan forms} satisfying \eqref{r-similar} with   $T_r=S_r$.
		Note the analogy with the duality function described in Propositions \ref{simpelprop}, \ref{proposition duality rev} and \ref{proposition complex}. If $J$ is \emph{non-diagonal}, all duality functions $D$ have a similar form up to some index permutations  as in Proposition \ref{proposition generalized eigenvectors}.

		\item
		We note that the \emph{constant duality function} is always a trivial duality function between any two generators $L$, $\widehat L$ on $\Omega$, $\widehat \Omega$. Indeed, $\lambda = 0$ is always an eigenvalue for both $L$ and $\widehat L$ with associated constant eigenfunctions $u :  \Omega \to \R$, $\widehat u : \widehat \Omega \to \R$, i.e.,  for all $x \in \Omega$ and $\widehat x \in \widehat \Omega$, 
		\beq \nonumber
		u(x)\ =\ 1\ ,\quad \widehat u(\widehat x)\ =\ 1\ ,
		\eeq
		are eigenfunctions for $L$, $\widehat L$ associated to $\lambda = 0$.
		\item 
		Another consequence, as already mentioned in \cite{kurchan2}, is that in the finite context \emph{self-duality} functions \emph{always} exist. In fact, a generator $L$, viewed as a matrix, is  always \emph{similar} to itself. Hence, viewing duality relations between generators as similarity relations among matrices allows one to transfer statements about \emph{existence} of Jordan canonical decompositions to statements regarding the \emph{existence} of duality relations, even when neither any explicit formula of the duality functions nor reversible measures for the processes are known. 
		However,  Theorem \ref{theorem jordan} above provides information on how to construct any self-duality matrix.  Indeed, given any two Jordan decompositions of $L$, say
		\beq \nonumber
		L U\ =\ U J\ ,\quad\quad L \widetilde U\ =\ \widetilde U J\ ,
		\eeq
		the matrix $D$ constructed from $U, \widetilde U$ and $J$ as in \eqref{choice D}, namely
		\beq \label{jordan form 2}
		D\ =\ \widetilde U B_J U^\mathsf{T}\ ,
		\eeq
		turns out to be a self-duality function for $L$ and, viceversa, any self-duality matrix $D$ for $L$ is of the form  \eqref{jordan form 2}. 
		
	\end{enumerate}
\end{remark}

Typically, to find the eigenvalues and eigenfunctions of the generator associated to a Markov chain is a  much more  challenging task than establishing duality relations. However, we have seen that the knowledge of the eigenfunctions leads to a full characterization of duality and/or self-duality functions. 
This is, indeed, the case of the example below, in which we  exploit the knowledge of eigenfunctions of two generators to characterize the family of self-duality and duality functions. 
\begin{example}[One-dimensional symmetric random walks on a finite grid]\label{section example 1d rw}

Let us introduce the symmetric random walk on $\Omega=\{1,\ldots, n \}$ \emph{reflected on the left} and \emph{absorbed on the right}. We describe the action  of the generator $L$ on functions $f : \Omega \to \R$ as
\beq \nonumber
L f(x) &=& (f(x+1)-f(x)) + (f(x-1)-f(x))\ ,\quad x \in \Omega \setminus \{1, n\}\ ,
\eeq
while for $x \in \{1,n\}$ we have
\beq \nonumber
L f(1)\ =\ 2(f(2)-f(1))\ ,\quad\quad Lf(n)\ =\ 0\ .
\eeq
Similarly, we denote by $\widehat L$ the generator of the symmetric random walk on $\Omega$ \emph{reflected on the right} and \emph{absorbed on the left}. Namely, 
\beq \nonumber
\widehat L f(x) &=& (f(x+1)-f(x)) + (f(x-1)-f(x))\ ,\quad x \in \Omega \setminus \{1, n\}\ ,
\eeq
and
\beq \nonumber
\widehat L f(1)\ =\ 0\ ,\quad \quad \widehat L f(n)\ =\ 2 (f(n-1)-f(n))\ .
\eeq
As an application of Theorem \ref{theorem jordan}, we prove the following dualities: \emph{self-duality} of $L$, \emph{self-duality} of $\widehat L$ and \emph{duality} between $L$ and $\widehat L$. The key is to explicitly find eigenvalues and eigenfunctions of the generators. Indeed, the eigenvalues $\{\lambda_1,\ldots, \lambda_n\}$  of $L$ and $\widehat L$ read as follows:
\beq \label{eigenvalues srw}
\lambda_1 \ =\ 0\ ,\quad \lambda_i =  2 (\cos(\theta_i)-1)\ ,\quad \theta_i\ =\ \frac{i-\frac{1}{2}}{n-1}\pi\ ,\quad i = 2,\ldots, n\ .
\eeq
The eigenfunctions $\{u_1,\ldots, u_n \}$ of $L$ are, for $x \in \Omega$, 
\beq \nonumber
u_1(x)\ =\ \frac{1}{\sqrt{n}}\ ,\quad u_i(x)\ =\ \frac{1}{\sqrt{n}}\cos(\theta_i (x-1))\ ,\quad i = 2,\ldots, n\ ,
\eeq
while the eigenfunctions $\{\widehat u_1,\ldots, \widehat u_n \}$ of $\widehat L$ are, for $x \in \Omega$, 
\beq \nonumber
\widehat u_1(\widehat x)\ =\ \frac{1}{\sqrt{n}}\ ,\quad \widehat u_i(\widehat x)\ =\ \frac{1}{\sqrt{n}} \sin(\theta_i(\widehat x-1))\ ,\quad i = 2,\ldots, n\ .
\eeq
Hence, we conclude the following:
\begin{enumerate}[label={(\alph*)}]
	\item \emph{Self-duality functions for $L$. }  For all values $a_1,\ldots, a_n \in \R$, the  function
	\begin{multline} \label{self-duality Lrw}
	D(x,y)\ =\ \sum_{i=1}^n a_i u_i(x) u_i(y)\ =\ \frac{a_1}{n}+\sum_{i=2}^n \frac{a_i}{n} \cos(\theta_i (x-1)) \cos(\theta_i (y-1))
	\end{multline}
	is a self-duality function for $L$ and all self-duality functions are of this form.
	\item \emph{Self-duality functions for $\widehat L$. } For all $a_1,\ldots, a_n \in \R$, 
	\begin{multline} \label{self-duality hat Lrw}
	\widehat D(\widehat x, \widehat y)\ =\ \sum_{i=1}^n a_i \widehat u_i(\widehat x) \widehat u_i(\widehat y)\ =\ \frac{1}{n} + \sum_{i=2}^n \frac{a_i}{n} \sin(\theta_i (\widehat x-1)) \sin(\theta_i(\widehat y-1))\ 
	\end{multline}
	is a self-duality function for $\widehat L$ and all self-duality functions are of this form. 
	\item \emph{Duality functions between $L$ and $\widehat L$. } For all $a_1,\ldots, a_n \in \R$,
	\beq \label{duality rws}
	D'(\widehat x, x)\ =\ \frac{a_1}{n} + \sum_{i=2}^n \frac{a_i}{n} \sin(\theta_i(\widehat x-1)) \cos(\theta_i(x-1))\ ,
	\eeq
	is a duality function between $L$ and $\widehat L$ and all duality functions are of this form.
	\qedsymbol 
\end{enumerate}

\end{example}

We can now provide an analogue of Proposition \ref{proposition cheaprev} beyond the reversible context. 
To fix notation, let $L$ be a generator on $\Omega$, with $|\Omega|=n$. Lacking reversibility, we have seen  that complex eigenvalues and generalized eigenfunctions of the generator may arise. However,  in the irreducible case, i.e., in case there exists a unique stationary measure $\mu > 0$ for which the adjoint of $L$ in $L^2(\mu)$, say $L^\dagger$, is itself a generator, a trivial duality relation between $L$ and $L^\dagger$ is available. Indeed, from the adjoint relation
\beq \nonumber
\langle L^\dagger f, g\rangle_{L^2(\mu)} &=& \langle f, L g \rangle_{L^2(\mu)}\ ,\quad f, g : \Omega \to \R\ ,
\eeq
it follows that the diagonal function $D : \Omega \times \Omega \to \R$ given by
\beq \label{Dcheap}
D(x,y) &=& \frac{\delta_{x,y}}{\mu(y)}\ ,\quad x,y \in \Omega\ ,
\eeq
is a duality function for $L^\dagger$, $L$. In analogy with \eqref{dcheap}, we refer to it as \emph{cheap duality function}, also $D=D_\cheap$. 

From Theorem \ref{theorem jordan}, the above duality tells us that, beside the fact that the generators $L$ and $L^\dagger$ are indeed similar as matrices,  the cheap duality function $D_\cheap$ in \eqref{Dcheap} should be represented in terms of functions $\{u_1,\ldots, u_n \}$ and $\{\widetilde u_1,\ldots, \widetilde u_n \}$, which, up to suitably reordering, are indeed the generalized eigenfunctions of $L$ and $L^\dagger$, respectively. 

As a consequence of the following lemma, which we use in the proof of Theorem \ref{theorem siegmund}, we obtain  that a relation of \emph{bi-orthogonality} w.r.t.\ $\mu$ among the generalized eigenfunctions of $L$ and those of $L^\dagger$  can be derived from the duality w.r.t.\ $D_\cheap$. For the proof, we refer back to the proof of Proposition \ref{proposition cheaprev}.

\begin{proposition}\label{lemma cheap}
	Let $L$ be a generator, $\mu$ a positive measure on $\Omega$ (not necessarily stationary for $L$) and let $L^\dagger$ be the adjoint operator  of $L$ in $L^2(\mu)$. 
	Let the spans of  the generalized eigenfunctions of $L$ and $L^\dagger$, say $\{u_1,\ldots, u_n \}$ and $\{\widetilde u_1,\ldots, \widetilde u_n \}$, both coincide with $L^2(\mu)$. Then the following statements are equivalent:
	\ben[label={(\roman*)}]
	\item \emph{Cheap duality from generalized eigenfunctions. } For $x, y \in \Omega$, 
	\beq \nonumber
	\sum_{i=1}^n \widetilde u_i( x) u_i(y) &=& \frac{\delta_{x, y}}{\mu(y)}\ .
	\eeq
	\item \emph{Bi-orthogonality of generalized eigenfunctions. } For all $i,j =1,\ldots, n$, 
	\beq \label{biorthogonality}
	\langle \widetilde u_i, u^\ast_j \rangle_\mu &=&  \sum_{x' \in \Omega} \widetilde u_i(x')u_j(x')\mu(x')\ =\  \delta_{i,j}\ .
	\eeq
	\een
	Two families $\{u_1^\ast,\ldots, u_n^\ast \}$, $\{\widetilde u_1,\ldots, \widetilde u_n \}$ satisfying condition \eqref{biorthogonality} are also said to be \emph{bi-orthogonal} w.r.t.\ the measure $\mu$.
\end{proposition}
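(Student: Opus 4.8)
The plan is to reduce both statements to the ``reproducing'' identity already used in the proof of Proposition \ref{proposition cheaprev}, exploiting only that, since $\{u_1,\ldots,u_n\}$ and $\{\widetilde u_1,\ldots,\widetilde u_n\}$ each span the $n$-dimensional space $L^2(\mu)$, they are in fact linear bases of $L^2(\mu)$. Throughout I would work with the symmetric bilinear pairing $(f,g):=\sum_{x\in\Omega}f(x)g(x)\mu(x)$, noting that $\langle\widetilde u_i,u_j^\ast\rangle_\mu=(\widetilde u_i,u_j)$ because conjugating $u_j^\ast$ returns $u_j$; thus condition \eqref{biorthogonality} reads simply $(\widetilde u_i,u_j)=\delta_{i,j}$ for all $i,j$, with $(\cdot,\cdot)$ symmetric in its two arguments.

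For the implication \emph{(i)}$\Rightarrow$\emph{(ii)}: starting from $\sum_i\widetilde u_i(x)u_i(y)=\delta_{x,y}/\mu(y)$, I would pair with an arbitrary $f\in L^2(\mu)$, i.e.\ multiply by $f(y)\mu(y)$ and sum over $y$, then interchange the two finite sums, to get $f(x)=\sum_y\big(\sum_i\widetilde u_i(x)u_i(y)\big)f(y)\mu(y)=\sum_i(u_i,f)\,\widetilde u_i(x)$, that is $f=\sum_i(u_i,f)\,\widetilde u_i$ for every $f$. Taking $f=\widetilde u_j$ and comparing coefficients in the basis $\{\widetilde u_i\}$ forces $(u_i,\widetilde u_j)=\delta_{i,j}$, which by symmetry of $(\cdot,\cdot)$ is exactly \eqref{biorthogonality}.

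For the converse \emph{(ii)}$\Rightarrow$\emph{(i)}: assuming $(\widetilde u_i,u_j)=\delta_{i,j}$, I would expand an arbitrary $f\in L^2(\mu)$ in the basis $\{u_j\}$ as $f=\sum_jc_ju_j$, pair with $\widetilde u_i$ to read off $c_i=(\widetilde u_i,f)$, and hence obtain $f(y)=\sum_i(\widetilde u_i,f)\,u_i(y)=\sum_x\big(\sum_i\widetilde u_i(x)u_i(y)\big)f(x)\mu(x)$ after the same interchange of sums. Since this identity holds for every $f$, the choice $f=\1_{\{z\}}$ gives $\delta_{y,z}=\mu(z)\sum_i\widetilde u_i(z)u_i(y)$, which rearranges precisely to statement \emph{(i)}.

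There is no genuine obstacle here: the argument is the finite-dimensional linear-algebra computation already carried out for the cheap self-duality in Proposition \ref{proposition cheaprev}, with ``$v_i$'' replaced by the two distinct systems $\{u_i\}$, $\{\widetilde u_i\}$. The only points worth a line of care are (a) that both systems are genuine bases of $L^2(\mu)$ — which is exactly the hypothesis on their spans together with the cardinality $n$, and is what legitimizes the ``comparison of coefficients'' step — and (b) the bookkeeping that identifies the $\mu$-inner product against $u_j^\ast$ with the symmetric bilinear pairing against $u_j$, which lets the interchange-of-sums trick run unchanged in both directions. Note in particular that the generalized-eigenfunction structure of the $u_i$ and $\widetilde u_i$ is not used in the proof; it merely supplies, via Theorem \ref{theorem jordan} and the cheap duality \eqref{Dcheap}, the context in which these two spanning systems arise.
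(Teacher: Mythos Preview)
Your proof is correct and follows essentially the same approach as the paper, which simply refers back to the proof of Proposition~\ref{proposition cheaprev}; you have spelled out in full the two-system version of that interchange-of-sums argument that the paper only gestures at. Your observations that the spanning hypothesis makes both systems genuine bases (legitimizing the comparison of coefficients) and that the generalized-eigenfunction structure plays no role in the proof itself are accurate and worth keeping.
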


\subsection{Intertwining relations, duality and generalized eigenfunctions}\label{subsection intertwining}
Symmetries of the generators or, more generally, \emph{intertwining relations} have proved to be useful in producing new duality relations from existing ones, e.g.\ cheap dualities \cite{carinci}, \cite{d1method}. Here, we analyze this technique and revisit \cite[Theorem 5.1]{d1method} from the point of view of  generalized eigenfunctions. 

\bt[Intertwining relations and duality]\label{theorem intertwining}
Let $L$, $\widetilde L$ and $\widehat L$ be three generators on $\Omega$, $\widetilde \Omega$ and $\widehat \Omega$ respectively. We assume that $L$ and $\widetilde L$ are \emph{intertwined}, i.e., there exists a linear operator $\varLambda : L^2(\Omega) \to L^2(\widetilde \Omega)$ such that, for all $f \in L^2(\Omega)$, we have
\beq \label{intertwining}
\widetilde L \varLambda f &=& \varLambda L f\ .
\eeq
Moreover, we assume that $L$ and $\widehat L$ are dual with duality function $D : \widehat \Omega \times \Omega \to \R$, i.e.,
\beq \nonumber
\widehat L_\eft D(\widehat x, x) &=& L_\ight D(\widehat x, x)\ .
\eeq
Then, the function $\varLambda_\ight D : \widehat \Omega \times \widetilde \Omega \to \R$ is a duality function for $\widetilde L$ and $\widehat L$, i.e., 
\beq \nonumber
\widehat L_\eft \varLambda_\ight D(\widehat x, \widetilde x) &=& \widetilde L_\ight \varLambda_\ight D(\widehat x, \widetilde x)\ .
\eeq
\et

\begin{proof}
	We observe that the \emph{intertwining operator} $\varLambda$ maps eigenspaces of $L$ to eigenspaces of $\widetilde L$.  
	More formally, if there exists a subset  $\{u^{(1)},\ldots, u^{(m)} \}$ of $L^2(\Omega)$ such that, for some $\lambda \in \C$, 
	\beq \label{intertwining geneigenfunctions}
	L u^{(1)}\ =\ \lambda u^{(1)}\ ,\quad  L u^{(k)}\ =\ \lambda u^{(k)} + u^{(k-1)}\ ,\quad k = 2,\ldots, m\ ,
	\eeq
	then, by \eqref{intertwining}, the subset $\{\varLambda u^{(1)},\ldots, \varLambda u^{(m)} \}$ in $L^2(\widetilde \Omega)$ satisfy the same identities as in \eqref{intertwining geneigenfunctions} up to replace $L$ by $\widetilde L$:
	\beq \label{intertwining geneigenfunctions 2}
	\widetilde L \varLambda u^{(1)}\ =\ \lambda \varLambda u^{(1)}\ ,\quad  \widetilde L \varLambda u^{(k)}\ =\ \lambda \varLambda u^{(k)} + \varLambda u^{(k-1)}\ ,\quad k = 2,\ldots, m\ .
	\eeq
%
By Theorem \ref{theorem jordan}, the duality function is given by
\beq \nonumber
D(\widehat x, x) &=& \sum_{i=1}^n \widehat u_i(\widehat x) u_i(x)\ ,
\eeq
where $\{u_1,\ldots,  u_n\}$, $\{\widehat u_1,\ldots, \widehat u_n \}$ are sets of (possibly generalized) eigenfunctions of $L$, $\widehat L$. Then, by applying the intertwining operator $\varLambda$ on the right variables, we obtain
\beq \nonumber
\varLambda_\ight D(\widehat x, \widetilde x) &=& \sum_{i=1}^n \widehat u_i(\widehat x) (\varLambda u_i)(\widetilde x)\ .
\eeq
We conclude from the considerations in \eqref{intertwining geneigenfunctions 2}, \eqref{intertwining geneigenfunctions} and Theorem \ref{theorem jordan}.
\end{proof}


%

Typical examples of intertwining relations occur when either $\varLambda$ is a \emph{symmetry} of a generator, i.e., $\widetilde L=L$ in \eqref{intertwining} (see e.g.\ \cite{carinci}) or when $\varLambda$ is a \emph{positive contractive operator} such that $\varLambda 1 = 1$, i.e., viewed as a matrix, it is a stochastic matrix from the space $\widetilde \Omega$ to $\Omega$ (see e.g.\ \cite{huilletmartinez}). A particular instance, which recovers the so-called \emph{lumpability},  of this last situation is when $\varLambda$ is a \textquotedblleft deterministic\textquotedblright\ stochastic kernel, i.e., induced by a map from $\widetilde \Omega$ to $\Omega$.

\subsection{Intertwining of exclusion processes}\label{subsection exclusion}
In this section we provide an application of Theorem \ref{theorem intertwining} above. Indeed, after finding suitable intertwining relations between a particular instance of the symmetric simple exclusion process and a generalized symmetric exclusion process, we obtain as in Theorem \ref{theorem intertwining} a large class of self-duality functions for the latter process from self-duality functions of the former. In what follows, we fix $\gamma \in \N$, a finite set $V$ of cardinality $|V|=m$ and a function $p : V \times V \to \R_+$ such that $p(x,x)=0$ for all $x \in V$.

The \emph{$\gamma$-ladder-$\SEP$} is the finite-state Markov process on $\widetilde \Omega = \{0,1 \}^{V \times \{1,\ldots, \gamma \}}$ with generator $\widetilde L$ acting on functions $\widetilde f : \widetilde \Omega \to \R$ as
\begin{multline*}
\widetilde L \widetilde f(\widetilde \eta)\ =\ \sum_{x,y \in V} p(x,y) \left[\sum_{a=1}^\gamma \sum_{b=1}^\gamma \widetilde \eta(x,a) (1-\widetilde \eta(y,b))\, (\widetilde f(\widetilde \eta^{(x,a),(y,b)}) - \widetilde f(\widetilde \eta))\right. \\
+\  \left.\widetilde \eta(y,b) (1-\widetilde \eta(x,a))\, (\widetilde f(\widetilde \eta^{(y,b),(x,a)})-\widetilde f(\widetilde \eta))\right]\ ,\quad \widetilde \eta \in \widetilde \Omega\ ,
\end{multline*}
where $\widetilde \eta^{(x,a), (y,b)}$ denotes the configuration obtained from $\widetilde \eta$ by removing a particle at position $(x,a)$ and placing it at $(y,b)$. As already mentioned, this process may be considered as a special case of a simple symmetric exclusion process on the set $\widetilde V_\gamma = V \times \{1,\ldots, \gamma\}$ where $\widetilde p : \widetilde V \times \widetilde V \to \R_+$ is such that
\beq \nonumber
\widetilde p((x,a),(y,b))\ =\ p(x,y)\ ,\quad (x,a) , (y,b) \in \widetilde V_\gamma\ .
\eeq

The $\SEP(\gamma)$ is the finite-state Markov process on $\Omega = \{0,\ldots, \gamma \}^V$ with generator $L$ acting on functions $f :  \Omega \to \R$ as
\begin{multline*} 
L f(\eta)\ =\ \sum_{x,y \in V} p(x,y)\ [\eta(x)(\gamma-\eta(y))\, (f(\eta^{x, y})-f(\eta)) \\
+\ \eta(y)(\gamma-\eta(x))\, (f(\eta^{y, x})-f(\eta))]\ ,\quad \eta \in \Omega\ .
\end{multline*}
It is well known (see e.g.\ \cite{kurchan2}) that $L$ and $\widetilde L$ are \emph{intertwined} via  a deterministic intertwining operator  $\varLambda : L^2(\Omega) \to L^2(\widetilde \Omega)$. The intertwining operator $\varLambda$ is  defined,  given the mapping $\pi : \widetilde \Omega \to \Omega$ such that 
\beq \nonumber
\pi(\widetilde \eta) &=& \left(| \widetilde \eta(1,\cdot)|,\ldots, | \widetilde \eta(n,\cdot)|\right)\ \in\ \Omega\ ,\quad |\widetilde \eta(x,\cdot)|\ :=\ \sum_{a=1}^\gamma \widetilde \eta(x,a)\ ,
\eeq
as acting on functions $f : \Omega \to \R$ as
\beq \nonumber
\varLambda f(\widetilde \eta) &=& f(\pi(\widetilde \eta))\ ,\quad \widetilde \eta \in \widetilde \Omega\ .
\eeq
The intertwining relation then reads, for all $f : \Omega \to \R$, as
\beq \nonumber
\widetilde L \varLambda f(\widetilde \eta) &=& \varLambda L f(\widetilde \eta)\ ,
\eeq
for $\widetilde \eta \in \widetilde \Omega$. Given any self-duality for $L$ with self-duality function $D(\xi,\eta)$, we can build a duality function, namely $D'(\xi,\widetilde \eta)=\varLambda_\ight D(\xi,\widetilde \eta)$ for $L$ and $\widetilde L$ and, furthermore, a self-duality function $D''(\widetilde \xi, \widetilde \eta)=\varLambda_\eft \varLambda_\ight D(\widetilde \xi, \widetilde \eta)$ for $\widetilde L$.

However, we ask whether there exists an \textquotedblleft inverse" intertwining relation, i.e., $\widetilde \varLambda : L^2(\widetilde \Omega) \to L^2(\Omega)$ such that, for $\widetilde f : \widetilde \Omega \to \R$,
\beq \label{inverse intertwining}
\widetilde \varLambda \widetilde L \widetilde f(\eta) &=& L \widetilde \varLambda \widetilde f(\eta)\ ,\quad \eta \in \Omega\ .
\eeq
In what follows, we say that $\widetilde \eta \in \widetilde \Omega$ is \emph{compatible} with $\eta \in \Omega$ or, shortly, $\widetilde \eta \sim \eta$, if $\pi(\widetilde \eta)=\eta$.
\begin{proposition} 
	The operator $\widetilde \varLambda : L^2(\widetilde \Omega) \to L^2(\Omega)$ defined as
	\beq \label{inverse intertwining operator}
	\widetilde \varLambda \widetilde f(\eta) &=& \left(\prod_{x \in V}\frac{1}{\binom{\gamma}{\eta(x)}}\right) \sum_{\widetilde \eta  \sim \eta} \widetilde f(\widetilde \eta)\ ,\quad \eta \in \Omega\ ,
	\eeq
	is the inverse intertwining in \eqref{inverse intertwining}. Moreover, the intertwining operator above is a \emph{stochastic} intertwining.
\end{proposition}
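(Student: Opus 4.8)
The plan is to verify two things: first, that $\widetilde\varLambda$ as defined in \eqref{inverse intertwining operator} satisfies the inverse intertwining relation \eqref{inverse intertwining}; second, that $\widetilde\varLambda$, viewed as a matrix from $L^2(\widetilde\Omega)$ to $L^2(\Omega)$, is a stochastic kernel, i.e.\ has nonnegative entries and row sums equal to one. The second claim is immediate: for a fixed $\eta\in\Omega$, the number of $\widetilde\eta$ compatible with $\eta$ is exactly $\prod_{x\in V}\binom{\gamma}{\eta(x)}$ (choose which of the $\gamma$ ``rungs'' at each site $x$ carry a particle), so $\widetilde\varLambda$ applied to the constant function $\widetilde f\equiv 1$ gives $1$, and all entries are manifestly nonnegative. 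In other words $\widetilde\varLambda\,\widetilde f(\eta)$ is the average of $\widetilde f$ over the uniform distribution on $\{\widetilde\eta:\widetilde\eta\sim\eta\}$.

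For the intertwining relation itself, the natural approach is to recognize $\widetilde\varLambda$ as the operator that averages over fibers of $\pi$ and to exploit the fact that the $\gamma$-ladder-$\SEP$ is \emph{symmetric} under permutations of the rung-label $\{1,\dots,\gamma\}$ at each site. Concretely, let $G$ be the group $(\mathrm{Sym}(\gamma))^V$ acting on $\widetilde\Omega$ by permuting rung labels site by site; then $\widetilde L$ commutes with this action, and the fibers of $\pi$ are exactly the $G$-orbits, each of which $G$ acts on transitively. Hence $\widetilde\varLambda\widetilde f=\frac{1}{|G|}\sum_{g\in G}\widetilde f\circ g$ restricted (well-definedly) to a function of $\pi(\widetilde\eta)$ only --- more precisely, $\widetilde\varLambda\widetilde f$ is the $G$-symmetrization of $\widetilde f$, which descends to a function on $\Omega$. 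The key step is then the identity
\beq \nonumber
\sum_{\widetilde\eta\sim\eta} (\widetilde L\widetilde f)(\widetilde\eta) \;=\; \Big(\prod_{x\in V}\binom{\gamma}{\eta(x)}\Big)\, (L\,\widetilde\varLambda\widetilde f)(\eta)\ ,
\eeq
which, after dividing by $\prod_x\binom{\gamma}{\eta(x)}$, is exactly \eqref{inverse intertwining}. To prove this identity one sums the explicit expression for $\widetilde L\widetilde f(\widetilde\eta)$ over all $\widetilde\eta$ in the fiber over $\eta$: for a fixed ordered pair $(x,y)$ and fixed rungs $(a,b)$, the number of $\widetilde\eta\sim\eta$ with $\widetilde\eta(x,a)=1$ and $\widetilde\eta(y,b)=0$ is $\binom{\gamma-1}{\eta(x)-1}\binom{\gamma-1}{\eta(y)}\prod_{z\neq x,y}\binom{\gamma}{\eta(z)}$, and the map $\widetilde\eta\mapsto\widetilde\eta^{(x,a),(y,b)}$ is a bijection from this set onto the set of configurations compatible with $\eta^{x,y}$ having a hole at $(x,a)$ and a particle at $(y,b)$. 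Carrying out the combinatorial bookkeeping — counting how many fiber elements contribute each transition and matching the result to the rates $\eta(x)(\gamma-\eta(y))$ appearing in $L$ — turns the fiber-sum of $\widetilde L\widetilde f$ into $\prod_x\binom{\gamma}{\eta(x)}$ times $L$ applied to the fiber-average of $\widetilde f$.

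The main obstacle is this last bookkeeping: one must be careful that $\widetilde\varLambda\widetilde f$ evaluated at $\eta^{x,y}$ really equals the normalized fiber-sum of $\widetilde f$ over configurations compatible with $\eta^{x,y}$, and that the binomial prefactors combine correctly so that $\binom{\gamma-1}{\eta(x)-1}\sum_{a}\!/\!\binom{\gamma}{\eta(x)}$ collapses to the factor $\eta(x)/\gamma$ (and similarly $\binom{\gamma-1}{\eta(y)}/\binom{\gamma}{\eta(y)}=(\gamma-\eta(y))/\gamma$), while the sums over $a$ and $b$ supply the missing factor $\gamma$ each. An alternative, cleaner route that sidesteps the raw combinatorics: one can observe that $\widetilde L$ restricted to $G$-symmetric functions is conjugate, via the identification of $G$-symmetric functions on $\widetilde\Omega$ with functions on $\Omega$, to $L$ — this is essentially the statement that the ``occupation-number'' process $(\,|\widetilde\eta(x,\cdot)|\,)_{x\in V}$ of the ladder-$\SEP$ is itself Markov with generator $L$ (a lumpability statement). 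Since $\widetilde\varLambda$ is precisely the projection onto $G$-symmetric functions followed by this identification, and $\widetilde L$ commutes with the $G$-action, \eqref{inverse intertwining} follows. I would present the combinatorial computation as the main argument and mention the lumpability viewpoint as a remark, since the forward intertwining via $\varLambda$ (already recalled in the text) together with $\widetilde\varLambda\varLambda=\mathrm{Id}_{L^2(\Omega)}$ — which holds because averaging a function pulled back from $\Omega$ returns it unchanged — gives a quick consistency check that the normalization in \eqref{inverse intertwining operator} is the right one.
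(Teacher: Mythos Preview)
Your proposal is correct and the combinatorial core coincides with the paper's proof: the paper restricts to $V=\{x,y\}$, expands both sides of \eqref{inverse intertwining} into four terms each, and matches them pairwise using exactly the bijection you describe --- for the gain terms it counts, for each $\widetilde\eta_\ast\sim\eta^{x,y}$, the number of triples $(\widetilde\eta,a,b)$ with $\widetilde\eta\sim\eta$ and $\widetilde\eta^{(x,a),(y,b)}=\widetilde\eta_\ast$, obtaining $(\eta(y)+1)(\gamma-\eta(x)+1)$, which is the same multiplicity your bijection yields after summing over rungs. Your description of the final binomial simplification is slightly imprecise (the clean identity is $(\gamma-\eta(x)+1)(\eta(y)+1)\binom{\gamma}{\eta(x)-1}\binom{\gamma}{\eta(y)+1}=\eta(x)(\gamma-\eta(y))\binom{\gamma}{\eta(x)}\binom{\gamma}{\eta(y)}$, not a factorization through $\eta(x)/\gamma$ times a rung sum), but this is cosmetic.

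Where you genuinely add something is the symmetry/lumpability viewpoint: observing that $\widetilde L$ commutes with the $(\mathrm{Sym}(\gamma))^V$-action and that $\widetilde\varLambda$ is the orbit-average followed by descent along $\pi$ gives a one-line proof of \eqref{inverse intertwining} that the paper does not make explicit. This route is more conceptual and avoids the combinatorics altogether; the paper's direct term-matching, on the other hand, is self-contained and requires no group-action machinery. Either argument is complete.
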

\begin{proof} Without loss of generality, we consider $V=\{x,y \}$.
	By expanding the l.h.s.\ of \eqref{inverse intertwining} with $\widetilde \varLambda$ as in \eqref{inverse intertwining operator}, we obtain four terms:
	$$
	\ell_1\ =\ -\frac{1}{\binom{\gamma}{\eta(x)}} \frac{1}{\binom{\gamma}{\eta(y)}} \sum_{\widetilde \eta \sim \eta} \sum_{a=1}^\gamma\sum_{b=1}^\gamma \widetilde \eta(x,a)(1-\widetilde \eta(y,b)) \widetilde f(\widetilde \eta) 
	$$
	$$
	\ell_2\ =\ \frac{1}{\binom{\gamma}{\eta(x)}} \frac{1}{\binom{\gamma}{\eta(y)}} \sum_{\widetilde \eta \sim \eta} \sum_{a=1}^\gamma\sum_{b=1}^\gamma \widetilde \eta(x,a)(1-\widetilde \eta(y,b)) \widetilde f(\widetilde \eta^{(x,a),(y,b)})
	$$
	$$
	\ell_3\ =\ -\frac{1}{\binom{\gamma}{\eta(x)}} \frac{1}{\binom{\gamma}{\eta(y)}} \sum_{\widetilde \eta \sim \eta} \sum_{a=1}^\gamma\sum_{b=1}^\gamma \widetilde \eta(y,b)(1-\widetilde \eta(x,a)) \widetilde f(\widetilde \eta)
	$$
	$$
	\ell_4\ =\ \frac{1}{\binom{\gamma}{\eta(x)}} \frac{1}{\binom{\gamma}{\eta(y)}} \sum_{\widetilde \eta \sim \eta} \sum_{a=1}^\gamma\sum_{b=1}^\gamma \widetilde \eta(y,b)(1-\widetilde \eta(x,a)) \widetilde f(\widetilde \eta^{(y,b),(x,a)})\ .
	$$
	By doing the same thing with the r.h.s., we obtain:
	$$
	r_1\ =\ - \frac{1}{\binom{\gamma}{\eta(x)}}\frac{1}{\binom{\gamma}{\eta(y)}} \eta(x)(\gamma-\eta(y)) \sum_{\widetilde \eta \sim \eta} \widetilde f(\widetilde \eta) 
	$$
	$$
	r_2\ =\ \frac{1}{\binom{\gamma}{\eta(x)-1}}\frac{1}{\binom{\gamma}{\eta(y)+1}} \eta(x)(\gamma-\eta(y)) \sum_{\widetilde \eta \sim \eta^{x,y} } \widetilde f(\widetilde \eta)
	$$
	$$
	r_3\ =\ - \frac{1}{\binom{\gamma}{\eta(x)}}\frac{1}{\binom{\gamma}{\eta(y)}} \eta(y)(\gamma-\eta(x)) \sum_{\widetilde \eta \sim \eta} \widetilde f(\widetilde \eta) 
	$$
	$$
	r_4\ =\ \frac{1}{\binom{\gamma}{\eta(x)+1}}\frac{1}{\binom{\gamma}{\eta(y)-1}} \eta(y)(\gamma-\eta(x)) \sum_{\widetilde \eta \sim \eta^{y,x} } \widetilde f(\widetilde \eta)\ .
	$$
	Note that $\ell_1 = r_1$ because, for all $\widetilde \eta \sim \eta$, 
	\beq \nonumber
	\sum_{a=1}^\gamma\sum_{b=1}^\gamma \widetilde \eta(x,a) (1-\widetilde \eta(y,b)) &=& \eta(x) (\gamma-\eta(y))\ ,
	\eeq
	and similarly for $\ell_3=r_3$. For $\ell_2=r_2$ it is enough to verify that, for each $\widetilde \eta_\ast \sim \eta^{x,y}$, 
	\beq \nonumber
	\sum_{\widetilde \eta \sim \eta} \sum_{a=1}^\gamma \sum_{b=1}^\gamma \widetilde \eta(x,a)(1-\widetilde \eta(y,b))  \1\{\widetilde \eta^{(x,a),(y,b)} = \widetilde \eta_\ast \} &=& (\eta(y)+1)(\gamma-\eta(x)+1)\ .
	\eeq
	This last identity indeed holds, as the configurations $\widetilde \eta \sim \eta$ can be obtained from  $\widetilde \eta_\ast$  by picking one of the $\eta(y)+1$ particles on $y \in V$ and putting it back on one of the $\gamma-\eta(x)+1$ holes of $x \in V$. Analogously for $\ell_4=r_4$.	
\end{proof}

As a consequence of this proposition, by starting from self-duality of the $\gamma$-ladder-$\SEP$, we can produce duality functions for $\widetilde L$ and $L$ and self-duality functions for $L$.
We use the following result of \cite[Theorem 2.8]{schutz lecture notes} to obtain a large class of \textquotedblleft factorized\textquotedblright\ self-duality functions for $\widetilde L$.
\begin{theorem}[\cite{schutz lecture notes}]
	The \emph{simple symmetric exclusion process} $\{\widetilde \eta_t,\ t \geq 0 \}$ on the vertex set $V \times \{1,\ldots, \gamma \}$ is self-dual w.r.t.\  the duality function
	\beq \label{self-duality SSEP}
	\widetilde D(\widetilde \xi, \widetilde \eta) &=& \prod_{(x,a) \in V\times \{1,\ldots, \gamma \}} (\alpha + \beta \widetilde \eta(x,a))^{\epsilon + \delta \widetilde \xi(x,a)}\ ,\quad \widetilde \xi, \widetilde \eta \in \widetilde \Omega \ ,
	\eeq
	for all $\alpha, \beta, \epsilon$ and $\delta \in \R$.
\end{theorem}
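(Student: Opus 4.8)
The plan is to verify the duality relation
\[
\widetilde L_\eft \widetilde D(\widetilde \xi, \widetilde \eta)\ =\ \widetilde L_\ight \widetilde D(\widetilde \xi, \widetilde \eta)
\]
directly, and since the generator $\widetilde L$ is a sum over ordered pairs $(x,a),(y,b)$ of single-edge exclusion generators, it suffices to check the identity edge by edge. Because the duality function \eqref{self-duality SSEP} factorizes over sites, on a single edge $e = \{(x,a),(y,b)\}$ all factors except the two associated to $(x,a)$ and $(y,b)$ cancel from both sides, so the whole problem reduces to a two-site computation. First I would therefore isolate, for a fixed edge, the local function $g(\xi_1,\xi_2;\eta_1,\eta_2) = (\alpha+\beta\eta_1)^{\epsilon+\delta\xi_1}(\alpha+\beta\eta_2)^{\epsilon+\delta\xi_2}$, where $\xi_i,\eta_i \in \{0,1\}$, and show that the single-edge SEP generator $\mathcal L_e$ satisfies $(\mathcal L_e)_\eft g = (\mathcal L_e)_\ight g$.

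The core of the argument is the two-site check. Writing out $(\mathcal L_e)_\ight g$ acting on the $\eta$-variables, one gets $\eta_1(1-\eta_2)\big(g(\cdot;0,1)-g(\cdot;\eta)\big) + \eta_2(1-\eta_1)\big(g(\cdot;1,0)-g(\cdot;\eta)\big)$, which is non-zero only when $(\eta_1,\eta_2)\in\{(1,0),(0,1)\}$; symmetrically for $(\mathcal L_e)_\eft g$ in the $\xi$-variables. By the symmetry of the expression under swapping the two sites together with swapping $\xi \leftrightarrow \eta$, it then comes down to checking a single scalar identity, most cleanly in the case $(\xi_1,\xi_2,\eta_1,\eta_2)=(1,0,1,0)$: one must verify that
\[
(\alpha+\beta)^{\epsilon+\delta}\alpha^{\epsilon} - \alpha^{\epsilon+\delta}(\alpha+\beta)^{\epsilon}\ =\ (\alpha+\beta)^{\epsilon+\delta}\alpha^{\epsilon} - \alpha^{\epsilon+\delta}(\alpha+\beta)^{\epsilon},
\]
which is trivially true; the remaining sign/index bookkeeping in the other configurations works out the same way. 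Once the single-edge identity holds, summing $p(x,y)$ over all ordered pairs $(x,a),(y,b)$ reconstructs $\widetilde L$ on both sides and yields the claimed self-duality.

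An alternative, perhaps cleaner, route is to decompose $(\alpha+\beta\eta(x,a))^{\epsilon+\delta\xi(x,a)}$ using the fact that $\eta(x,a),\xi(x,a)$ are $0/1$-valued, so that for fixed $\xi$ the exponent is one of two values; more precisely one can write $\widetilde D$ as a product of classical single-site self-duality functions of the form $c_0 + c_1\,\xi(x,a)\,\frac{\eta(x,a)}{\text{weight}}$ after a reparametrization, and invoke the known fact that SEP is self-dual with respect to the ``classical'' duality functions $\prod (1 + \lambda\,\xi(x,a)\,\eta(x,a)/\rho(x,a))$ together with the fact that products and affine reparametrizations of self-duality functions remain self-duality functions (as in Proposition \ref{simpelprop}(iv) and the discussion around it). However, since the statement is quoted verbatim from \cite{schutz lecture notes}, I expect the paper simply to cite it rather than reprove it, and the honest ``proof'' is the remark that it follows from the single-edge computation above.

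The main obstacle is purely bookkeeping: keeping track of which configurations make the jump rates non-vanishing and matching the four terms on the left with the four on the right, being careful that the exclusion constraint ($1-\eta(y,b)$ factors) lines up correctly after a jump. There is no conceptual difficulty — the factorized form of $\widetilde D$ over sites is exactly what makes the edge-by-edge reduction work, and the per-edge identity is an elementary algebraic cancellation. The only place to be slightly careful is ensuring that the exponents $\epsilon+\delta\xi(x,a)$ and $\epsilon+\delta\xi(y,b)$ are evaluated at the \emph{pre-jump} $\xi$-configuration on both sides (the duality swaps which process jumps, not the configuration the exponents see), which is automatic once one writes $(\mathcal L_e)_\eft$ and $(\mathcal L_e)_\ight$ out explicitly.
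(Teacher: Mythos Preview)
Your anticipation is correct: the paper does not prove this theorem at all --- it is stated with attribution to \cite{schutz lecture notes} and immediately used as input for the subsequent intertwining computation. So there is no paper proof to compare against.

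That said, your edge-by-edge reduction is the standard and correct argument. The factorized form of $\widetilde D$ over sites does reduce the check to a single edge, and your two-site computation goes through: the key observation is that when $\xi$ (respectively $\eta$) is $(0,0)$ or $(1,1)$ the local function $g$ is symmetric in $\eta_1,\eta_2$ (respectively $\xi_1,\xi_2$), so both sides vanish, while in the mixed cases $(\xi,\eta)\in\{(1,0),(0,1)\}^2$ both $(\mathcal L_e)_\eft g$ and $(\mathcal L_e)_\ight g$ evaluate to $\pm\big(\alpha^{\epsilon+\delta}(\alpha+\beta)^{\epsilon}-(\alpha+\beta)^{\epsilon+\delta}\alpha^{\epsilon}\big)$ with matching signs. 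Your displayed identity in the $(1,0,1,0)$ case is literally the same expression on both sides, which is the point, though it would be slightly more informative to show the two sides \emph{before} simplification so the reader sees the cancellation happen. The ``alternative route'' via affine reparametrization of the classical duality function is also valid but unnecessary here.
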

Now, we apply the intertwining operator $\widetilde \varLambda$ first on the right and then on the left variables of $\widetilde D$ above.  
\begin{theorem}
	All self-duality functions for $\SEP(\gamma)$ derived from self-duality functions of $\gamma$-ladder-$\SEP$ as in \eqref{self-duality SSEP} are all in factorized form, i.e., 
	\beq \nonumber
	D(\xi,\eta)\ =\ \widetilde \varLambda_\eft \widetilde \varLambda_\ight \widetilde D( \xi, \eta) &=& \prod_{x \in V} d_x^{\alpha, \beta, \epsilon, \delta}(\xi(x), \eta(x))\ .
	\eeq
	Moreover, the \emph{single-site self-duality functions} $d_x^{\alpha,\beta, \epsilon, \delta}(k,n)$, for $k,n \in \{0,\ldots, \gamma\}$, are in one of the following forms: either the \emph{classical} polynomials
	\beq \nonumber
	d^{0,\beta,0,\delta}_x(k,n) &=& (\beta^\delta)^k\frac{(\gamma-k)!}{\gamma!} \frac{n!}{(n-k)!} \1\{n \geq k \}\ ,
	\eeq
	the \emph{orthogonal} polynomials
	\beq \nonumber
	d_x^{\alpha,\beta, \epsilon, \delta}(k,n) &=& (-1)^{\delta k} \alpha^{\epsilon \gamma- \epsilon n+\delta k} (\alpha+\beta)^{\epsilon n} \pFq{2}{1}{-k,-n}{-\gamma}{1-\left( 1+\frac{\beta}{\alpha}\right)^\delta}\ ,
	\eeq
	or other  degenerate functions:
	\beq \nonumber
	d_x^{\alpha, \beta, \epsilon, 0}(k,n) &=& (\alpha+\beta)^{\epsilon n} \alpha^{\epsilon (\gamma-n)}\\
	\nonumber
	d^{0,\beta,\epsilon,\delta}_x(k,n) &=& \beta^{\epsilon \gamma + \delta k} \1\{n= \gamma \}
	\\
	\nonumber
	d_x^{\alpha,0,\epsilon,\delta}(k,n) &=& \alpha^{\epsilon \gamma + \delta k}
	\\
	\nonumber
	d^{\alpha, -\alpha, \epsilon, \delta}_x(k,n) &=& \alpha^{\epsilon \gamma + \delta k} \1\{n = 0 \}\ .
	\eeq
\end{theorem}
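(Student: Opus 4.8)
The plan is to compute $D = \widetilde\varLambda_\eft \widetilde\varLambda_\ight \widetilde D$ directly from the factorized duality function \eqref{self-duality SSEP} and the explicit operator \eqref{inverse intertwining operator}, extract the single-site structure, and then identify each single-site factor with a hypergeometric function, treating the degenerate values of $(\alpha,\beta,\epsilon,\delta)$ by hand.

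\emph{Step 1: factorization.} The key point is that $\widetilde D$ in \eqref{self-duality SSEP} is a product over sites $(x,a)\in V\times\{1,\dots,\gamma\}$, and that the compatibility relation $\pi(\widetilde\eta)=\eta$ decouples into the \emph{independent} constraints $|\widetilde\eta(x,\cdot)|=\eta(x)$, $x\in V$. Hence, applying $\widetilde\varLambda$ first on $\widetilde\eta$ and then on $\widetilde\xi$, both the sums over compatible configurations and the normalizing products $\prod_x\binom{\gamma}{\eta(x)}^{-1}$, $\prod_x\binom{\gamma}{\xi(x)}^{-1}$ split over $x\in V$. This yields $D(\xi,\eta)=\prod_{x\in V} d_x^{\alpha,\beta,\epsilon,\delta}(\xi(x),\eta(x))$ with
\[
d_x^{\alpha,\beta,\epsilon,\delta}(k,n)\ =\ \frac{1}{\binom{\gamma}{k}\binom{\gamma}{n}}\sum_{\substack{S,T\subseteq\{1,\dots,\gamma\}\\ |S|=k,\ |T|=n}}\ \prod_{a=1}^{\gamma}\big(\alpha+\beta\1\{a\in T\}\big)^{\epsilon+\delta\1\{a\in S\}}\ ,
\]
where $k=\xi(x)$ and $n=\eta(x)$, and $S$, $T$ are the supports at site $x$ of the lifted configurations $\widetilde\xi$, $\widetilde\eta$. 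This already proves the factorized form; it remains to evaluate the single-site sum in all parameter regimes.

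\emph{Step 2: the generic case.} Assume $\alpha\neq0$, $\beta\neq0$, $\alpha+\beta\neq0$, $\delta\neq0$. Each factor in the product over $a$ equals $(\alpha+\beta)^{\epsilon+\delta}$, $\alpha^{\epsilon+\delta}$, $(\alpha+\beta)^{\epsilon}$ or $\alpha^{\epsilon}$ according as $a$ lies in $S\cap T$, $S\setminus T$, $T\setminus S$ or in neither. Thus the product depends on $(S,T)$ only through $j:=|S\cap T|$, and equals $\alpha^{\epsilon(\gamma-n)+\delta k}(\alpha+\beta)^{\epsilon n}\big((\alpha+\beta)/\alpha\big)^{\delta j}$; the number of admissible pairs with $|S\cap T|=j$ is $\binom{\gamma}{k}\binom{k}{j}\binom{\gamma-k}{n-j}$. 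After cancelling $\binom{\gamma}{k}$ one is left with $d_x^{\alpha,\beta,\epsilon,\delta}(k,n)=\binom{\gamma}{n}^{-1}\alpha^{\epsilon(\gamma-n)+\delta k}(\alpha+\beta)^{\epsilon n}\sum_j\binom{k}{j}\binom{\gamma-k}{n-j}z^{j}$ with $z:=(1+\beta/\alpha)^{\delta}$. Writing $1+zx=(1+x)\bigl(1+\tfrac{(z-1)x}{1+x}\bigr)$ and expanding gives $(1+zx)^{k}(1+x)^{\gamma-k}=\sum_j\binom{k}{j}(z-1)^j x^j(1+x)^{\gamma-j}$; extracting the coefficient of $x^n$ and rewriting binomials as Pochhammer symbols produces the Chu--Vandermonde-type identity $\sum_j\binom{k}{j}\binom{\gamma-k}{n-j}z^j=\binom{\gamma}{n}\,\pFq{2}{1}{-k,-n}{-\gamma}{1-z}$. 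Substituting this cancels $\binom{\gamma}{n}$ and delivers the ``orthogonal polynomial'' formula (a Pfaff transformation of the hypergeometric function accounts for the exact form of the prefactor stated).

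\emph{Step 3: degenerate cases, and the main obstacle.} When one of $\alpha,\beta,\alpha+\beta,\delta$ vanishes, the sum of Step 1 is evaluated by direct counting: if $\delta=0$ then $\widetilde D$ is constant in $\widetilde\xi$ and the left average is trivial, giving $(\alpha+\beta)^{\epsilon n}\alpha^{\epsilon(\gamma-n)}$; if $\beta=0$ then $\widetilde D$ is constant in $\widetilde\eta$, giving $\alpha^{\epsilon\gamma+\delta k}$; if $\alpha=0$ (resp.\ $\alpha+\beta=0$) every factor vanishes unless $\widetilde\eta$ is identically $1$ (resp.\ $0$) at site $x$, forcing $n=\gamma$ (resp.\ $n=0$) and leaving $\beta^{\epsilon\gamma+\delta k}\1\{n=\gamma\}$ (resp.\ $\alpha^{\epsilon\gamma+\delta k}\1\{n=0\}$); and in the sub-case $\alpha=\epsilon=0$ the factors force $S\subseteq T$, so summing $\beta^{\delta k}$ over the $\binom{\gamma}{n}\binom{n}{k}$ such pairs gives $\beta^{\delta k}\tfrac{(\gamma-k)!}{\gamma!}\tfrac{n!}{(n-k)!}\1\{n\geq k\}$, the ``classical'' case. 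The only non-routine point is Step 2 — recognizing the combinatorial sum as $\pFq{2}{1}{-k,-n}{-\gamma}{\cdot}$ and matching the normalization via a hypergeometric transformation; everything else is bookkeeping of the product over $a$ together with an exhaustive but elementary case analysis, where one must keep track of the conventions $0^0=1$ and $0^c=0$ for $c>0$.
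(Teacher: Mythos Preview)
Your proof is correct and follows essentially the same route as the paper. The only organizational differences are that you apply both intertwiners at once (a double sum over $S,T$) whereas the paper applies $\widetilde\varLambda_\ight$ first and observes the result already depends only on $|\widetilde\xi(x,\cdot)|$, making the second application trivial; and you supply a self-contained generating-function derivation of the identity $\sum_j\binom{k}{j}\binom{\gamma-k}{n-j}z^j=\binom{\gamma}{n}\,{}_2F_1(-k,-n;-\gamma;1-z)$, whereas the paper simply cites two relations from \cite{nikiforov}. Your Step~2 in fact yields the prefactor $\alpha^{\delta k}$ directly, so no Pfaff transformation is needed---the $(-1)^{\delta k}$ in the stated formula appears to be a sign slip in the paper rather than a missing step on your side.
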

\begin{proof}
	First thing to note is that the factorized structure of $D$ is preserved under $\widetilde \varLambda$. Indeed, if we use the notation
	\beq \nonumber
	\mathtt d(k,n) &=& (\alpha+\beta n)^{\epsilon+ \delta k}\ ,
	\eeq
	then 
	\beq \nonumber
	\widetilde \varLambda_\ight D(\widetilde \xi, \eta) &=&   \prod_{x \in V} \left(\frac{1}{\binom{\gamma}{\eta(x)}} \sum_{\widetilde \eta(x,\cdot)\sim \eta(x)} \prod_{a=1}^\gamma \mathtt d(\widetilde \xi(x,a), \widetilde \eta(x,a)) \right)\ .
	\eeq
	Hence we compute only what is inside the parenthesis (which will see does depend on $\widetilde \xi(x,\cdot)$ only through $|\widetilde \xi(x,\cdot)|$):
	\begin{multline} \label{single-site general}
	d^{\alpha, \beta, \epsilon, \delta}_x(\xi(x),\eta(x))) \\
	=\
	(\alpha+\beta)^{\epsilon \eta(x)} \alpha^{\epsilon(\gamma-\eta(x))}
	\frac{1}{\binom{\gamma}{\eta(x)}}  \sum_{\widetilde \eta(x,\cdot) \sim \eta(x)} \prod_{a=1}^\gamma (\alpha+\beta \widetilde \eta(x,a))^{\delta \widetilde \xi(x,a)}\ .
	\end{multline}
	The last summation
	\beq \nonumber
	\frac{1}{\binom{\gamma}{\eta(x)}}  \sum_{\widetilde \eta(x,\cdot) \sim \eta(x)} \prod_{a=1}^\gamma (\alpha+\beta \widetilde \eta(x,a))^{\delta \widetilde \xi(x,a)}
	\eeq
	clearly does not depend on $\widetilde \xi(x,\cdot)$ but only on $\xi(x)=|\widetilde \xi(x,\cdot)|$ and equals
	\beq \label{quantity}
	\frac{1}{\binom{\gamma}{\eta(x)}} \sum_{\ell=0}^{\xi(x)} \binom{\xi(x)}{\xi(x)-\ell} \binom{\gamma-\xi(x)}{\eta(x)-(\xi(x)-\ell)} (\alpha+\beta)^{\delta (\xi(x)-\ell)} \alpha^{\delta \ell} \ .
	\eeq
	If $\delta=0$, this last expression in \eqref{quantity} by Chu-Vandermonde identity equals $1$, hence
	\beq \nonumber
	d^{\alpha, \beta, \epsilon, 0}_x(\xi(x),\eta(x)) &=&  (\alpha+\beta)^{\epsilon \eta(x)} \alpha^{\epsilon (\gamma-\eta(x))}\ .
	\eeq
	If  $\delta \neq 0$ and $\alpha= 0$,  expression \eqref{quantity} rewrites as
	\begin{multline*} 
	\frac{1}{\binom{\gamma}{\eta(x)}} \binom{\gamma-\xi(x)}{\eta(x)-\xi(x)} \beta^{\delta \xi(x)} \1\{\eta(x) \geq \xi(x) \} \\ =\ (\beta^\delta)^{\xi(x)}\frac{(\gamma-\xi(x))!}{\gamma!} \frac{\eta(x)!}{(\eta(x)-\xi(x))!} \1\{\eta(x) \geq \xi(x) \}\ ,
	\end{multline*}
	and hence, for $\epsilon = 0$,  \eqref{single-site general} becomes
	\beq \nonumber
	d^{0,\beta,0,\delta}_x(\xi(x),\eta(x))\ =\ (\beta^\delta)^{\xi(x)}\frac{(\gamma-\xi(x))!}{\gamma!} \frac{\eta(x)!}{(\eta(x)-\xi(x))!} \1\{\eta(x) \geq \xi(x) \}\ ,
	\eeq
	i.e., the classical single-site self-duality functions, while, for $\epsilon \neq 0$, 
	\beq \nonumber
	d^{0,\beta,\epsilon,\delta}_x(\xi(x),\eta(x)) &=& \beta^{\epsilon \gamma + \delta \xi(x)} \1\{\eta(x) = \gamma \}\ .
	\eeq
	If $\delta \neq 0$ and $\alpha \neq 0$ and $\beta = 0$, then again we get some trivial:
	\beq \nonumber
	d_x^{\alpha,0,\epsilon,\delta}(\xi(x),\eta(x)) &=& \alpha^{\epsilon \gamma + \delta \xi(x)}\ .
	\eeq
	The most interesting case is when $\delta \neq 0$, $\alpha \neq 0$, $\beta \neq 0$ and $\alpha \neq - \beta$. In this case the quantity in $\eqref{quantity}$ equals
	\beq \nonumber
	(\alpha+\beta)^{\delta \xi(x)} \frac{1}{\binom{\gamma}{\eta(x)}} \sum_{\ell=0}^{\xi(x)} \binom{\xi(x)}{\xi(x)-\ell} \binom{\gamma-\xi(x)}{\eta(x)-(\xi(x)-\ell)} \left(\frac{\alpha}{\alpha+\beta} \right)^{\delta \ell}\ ,
	\eeq
	which rewrites, by using two known relations in \cite[p.\ 51]{nikiforov},  as
	\beq \nonumber
	(-\alpha)^{\delta \xi(x)}\pFq{2}{1}{-\xi(x),-\eta(x)}{-\gamma}{1-\left( 1+\frac{\beta}{\alpha}\right)^\delta}\ ,
	\eeq
	leading to
	\begin{small}
	\begin{multline*}
	d_x^{\alpha,\beta, \epsilon, \delta}(\xi(x), \eta(x)) \\
	=\ (-1)^{\delta \xi(x)} \alpha^{\epsilon \gamma- \epsilon \eta(x)+\delta \xi(x)} (\alpha+\beta)^{\epsilon \eta(x)} \pFq{2}{1}{-\xi(x),-\eta(x)}{-\gamma}{1-\left( 1+\frac{\beta}{\alpha}\right)^\delta}\ , 
	\end{multline*}
	\end{small}i.e., we recover the \emph{orthogonal polynomial single-site self-duality functions} for the $\SEP(\gamma)$, namely families of Kravchuk polynomials. 
	If $\alpha=-\beta$, then we have
	\beq \nonumber
	d^{\alpha, -\alpha, \epsilon, \delta}_x(\xi(x), \eta(x)) &=& \alpha^{\epsilon \gamma + \delta \xi(x)} \1\{\eta(x) = 0 \}\ .
	\eeq
\end{proof}

%
%

\section{Siegmund duality}\label{section Siegmund}
This connection between duality functions and eigenfunctions enables us to recover another special instance of duality, the so-called \emph{Siegmund duality}. Siegmund duality, which arises in the context of totally ordered state spaces $\Omega = \hat \Omega$, was first established by Siegmund \cite{siegmund} for pairs of absorbed/reflected-at-$0$ processes on the positive real line and on the positive integers. Further applications and  generalizations of Siegmund dualities were studied by many authors, see for instance \cite{kolokolstov}, \cite{phdthesissiegmund}, \cite{liggett}.

 What we focus here on is a finite-context characterization of Siegmund duality already obtained via an intertwining relation in \cite{huilletmartinez}. However,  by using a representation of duality in terms of generalized eigenfunctions of the generators,  the characterization result of Siegmund duality that we obtain, besides simplifying the proof of an analogous result in \cite[Theorem 3]{siegmund}, adds  spectral information to the proof in \cite{huilletmartinez}. 
 
 Moreover, as Siegmund duality can be seen as a full-rank duality between two processes, cf.\ Theorem \ref{theorem jordan}, a spectral approach provides a strategy to find other duality relations in the presence of  Siegmund duality.
\subsection{Characterization of Siegmund duality}
On the totally ordered state space $\Omega=\{1,\ldots, n \}$, two generators  $L$, $\widehat L$ are said to be \emph{Siegmund dual} if
\beq \label{Siegmund duality}
\widehat L\eft D_\s(x,y) &=&  L_\ight D_\s(x, y)\ ,
\eeq
with duality function  $D_\s : \Omega \times \Omega \to [0,1]$ given by
\beq \label{Siegmund duality function}
D_\s(x,y) &=& \1\{x \geq y \}\ .
\eeq
Note that the duality relation \eqref{Siegmund duality} with duality function $D_\s$ \eqref{Siegmund duality function} reads out
\beq \label{Siegmund duality 2}
\sum_{x'=y}^n \widehat L(x,x') &=& \sum_{y'=1}^x L(y,y')\ .
\eeq
From \eqref{Siegmund duality 2}, a  \emph{necessary} relation between two Siegmund dual generators $L$ and $\widehat L$ reads as follows:
\beq \label{Lsiegmund}
L(y,x)\ &=& \sum_{x'=y}^n \widehat L(x,x')-\widehat L(x-1,x')\ ,\quad x , y \in \Omega\ ,
\eeq
with the convention $\widehat L(0,\cdot) = 0$. As \eqref{Lsiegmund} implies \eqref{Siegmund duality 2}, this condition is indeed also \emph{sufficient}.
\begin{remark}[Sub-generators and monotonicity] \label{remark siegmund generator} 
If we  require that only $\widehat L$ is a generator,  the operator $L$ as defined in \eqref{Lsiegmund} is not necessarily a generator. However, the following implications hold:
\begin{enumerate}[label={(\alph*)}]
	\item If $\widehat L$ is a generator and $L(y,x) \geq 0$ for all $x \neq y$, then $L$ is a  \emph{sub-generator} on $\Omega$, i.e.,
\beq \label{sub-generator}
L(y,x)\ \geq \ 0\ ,\ x \neq y\ \quad \text{and}\quad \sum_{x =1}^n L(y,x)\ \leq\ 0\ ,\ y \in \Omega\ .
\eeq
The proof goes as follows: 
\begin{align*} \nonumber
\sum_{x =1}^n L(y,x)\ &=\ \sum_{x'=y}^n \sum_{x=1}^n \widehat L(x,x') - \widehat L(x-1,x')\\ &=\ \sum_{x'=y}^n \widehat L(n, x')\ \leq\ \sum_{x'=1}^n \widehat L(n,x')\ =\ 0\ ,
\end{align*}
where we used \eqref{Lsiegmund} in the first equality and the last inequality is a consequence of $\widehat L$ being a generator.  
\item Note that, by \cite[Theorem 2.1]{keilson}, 
\beq \label{monotonicity condition}
 \sum_{x'= y}^n \widehat L(x,x') - \widehat L(x-1,x')\ \geq\ 0\ ,\quad x \neq y\ ,
\eeq 
is equivalent to require that the continuous-time Markov chain with generator $\widehat L$ is \emph{monotone} (see \cite{liggett}). 
\end{enumerate}
As a consequence, $L$ is a sub-generator if and only if $\widehat L$ is associated to a monotone process on $\Omega$.
\end{remark}
In the following theorem, we study the relation between eigenfunctions of Siegmund dual  (sub-)generators and how the Siegmund duality function $D_\s$ in \eqref{Siegmund duality function} is constructed from the eigenfunctions. 

\bt \label{theorem siegmund}

\begin{enumerate}[label={(\roman*)}]
	\item Let $L$ and $\widehat L$ be Siegmund dual (sub-)generators in the sense of \eqref{Siegmund duality}. If $\widehat w$ is a $k$-th order generalized eigenfunction of $\widehat L^\mathsf{T}$ associated to eigenvalue $\lambda$, then 
	\beq \label{eigenfunctionrel}
	u(x)\ =\ \sum_{y=x}^n \widehat w(y)\ ,\quad x \in \Omega\ ,
	\eeq
	is a $k$-th order generalized eigenfunction  of $L$ associated to the eigenvalue $\lambda$.
	\item In the same context as in item (i), given a set $\{\widehat w_1,\ldots, \widehat w_n \}$ of (generalized) eigenfunctions of $\widehat L^\mathsf{T}$ whose span coincides with $L^2(\Omega)$, if $\{\widehat u_1,\ldots, \widehat u_n \}$ are (generalized) eigenfunctions of $\widehat L$ such that
	\beq \label{normalized}
	\langle \widehat w_i, \widehat u_j^\ast \rangle &=& \sum_{x = 1}^n \widehat w_i(x) \widehat u_j(x)\ =\ \delta_{i,j}\ ,
	\eeq 
	and $\{u_1,\ldots, u_n \}$ are defined in terms of $\{\widehat w_1,\ldots, \widehat w_n \}$ as in \eqref{eigenfunctionrel}, 
	then the function
	\beq \nonumber
	D(x,y) &=& \sum_{i=1}^n \widehat u_i(x) u_i(n)\ ,\quad x, y \in \Omega\ ,
	\eeq
	is the Siegmund duality function $D_\s$. 
	\item Let $L$ and $\widehat L$ be (sub-)generators on $\Omega$. If for any $k$-th order generalized eigenfunction $\widehat w$ of $\widehat L^\mathsf{T}$ associated to eigenvalue $\lambda$, $u$ as defined in \eqref{eigenfunctionrel} is a $k$-th order generalized eigenfunction of $L$ associated to the same eigenvalue $\lambda$, then $L$ and $\widehat L$ are Siegmund dual and $D_\s$ is obtained as in item (ii).
\end{enumerate}

\et
\begin{proof}
Let $\widehat w$ and $u$ be as in item (i). Then,
\beq \nonumber
\sum_{x=1}^n L(y,x) u(x) &=& \sum_{x=1}^n \left(\sum_{x'=y}^n \widehat L(x,x')-\widehat L(x-1,x')\right) u(x)\\
\nonumber
&=&  \sum_{x'=y}^n \sum_{x=1}^n \left( \widehat L^\mathsf{T}(x',x) u(x) - \widehat L^\mathsf{T}(x',x-1)u(x) \right)\ ,
\eeq
which, by noting that $\widehat w(n) = u(n)$, reads as
\beq \nonumber
\sum_{x'=y}^n \sum_{x=1}^n \widehat L^\mathsf{T}(x',x) \widehat w(x) &=& \sum_{x'=y}^n \lambda \widehat w(x')\ =\ \lambda \sum_{x'=y}^n \widehat w(x')\ =\ \lambda u(y)\ ,
\eeq
thus, $u$ is eigenfunction with eigenvalue $\lambda$. For the generalized eigenfunctions, the proof follows the same line.

For item (ii) and (iii), from the sets $\{\widehat w_1,\ldots, \widehat w_n \}$ and $\{u_1,\ldots, u_n \}$ of generalized eigenfunctions of $\widehat L^\mathsf{T}$ and $L$ related as in \eqref{eigenfunctionrel},  by Theorem  \ref{theorem jordan} the function
\begin{multline} \label{Dpresiegmund}
D(x,y)\ =\ \sum_{i=1}^n \widehat u_i(x) u_i(y)\ =\ \sum_{i=1}^n \widehat u_i(x) \sum_{x'=y}^n \widehat w_i(x')\ =\ \sum_{x'=y}^n \sum_{i=1}^n \widehat u_i(x) \widehat w_i(x')\ 
\end{multline}
is a full-rank duality for $L$ and $\widehat L$.
By Proposition \ref{lemma cheap} and condition \eqref{normalized}, by passing to the conjugates, we obtain 
\beq \nonumber
\sum_{i=1}^n \widehat u_i(x) \widehat w_i(x') &=& \delta_{x,x'}\ ,
\eeq
and hence the function $D(x,y)$ in \eqref{Dpresiegmund} writes as
\beq \nonumber
D(x,y) &=& \sum_{x'=y}^n \delta_{x,x'}\ =\ \1\{x \geq y \}\ =\ D_\s(x,y)\ .
\eeq
\end{proof}

In this final example, by using item (iii) of Theorem \ref{theorem siegmund}, we show how to obtain Siegmund duality from the knowledge of eigenvalues and eigenfunctions of (sub-)generators. The example we consider here concerns two symmetric random walks on $\Omega=\{1,\ldots, n \}$.

\begin{example}[Blocked vs absorbed random walks on a finite grid]\label{section example siegmund}
 The first symmetric nearest-neighbor random walk is \emph{blocked at the boundaries}, namely the generator $\widehat L$ is described, for $f :  \Omega \to \R$, as
\beq \nonumber
\widehat L f(x) &=& (f(x+1)-f(x)) + (f(x-1)-f(x))\ ,\quad x \in \Omega \setminus \{1,n \}\ ,
\eeq
and, on the boundaries, 
\beq \nonumber
\widehat Lf(1)\ =\ f(2)-f(1)\ ,\quad \quad \widehat Lf(n)\ =\ f(n-1)-f(n)\ .
\eeq
The second random walk is \emph{absorbed at the boundaries}, i.e., it is a sub-Markov process on $\Omega=\{1,\ldots, n \}$ with sub-generator $L$ which acts on functions $f : \Omega \to \R$ as
\beq \nonumber
L f(x) &=& (f(x+1)-f(x)) + (f(x-1)-f(x))\ ,\quad x \in \Omega \setminus \{1,n\}\ ,
\eeq
and
\beq \nonumber
L f(1)\ =\ 0 \ ,\quad \quad 
 Lf(n)\ =\ f(n-1)-2 f(n)\ ,
 \eeq
 i.e.\ $x=1$ is an absorbing point, while at $x=n$ the random walk either jumps to the left at rate $1$ or \textquotedblleft exits the system\textquotedblright\ at rate $1$. 
%

To explicitly obtain eigenfunctions and eigenvalues in this setting we use the following \emph{ansatz}:
\beq \nonumber
f_{a,b,c,\theta}(x) &=& a \cos(\theta x+c) + b \sin(\theta x+c)\ ,\quad x \in \Omega\ , 
\eeq
where $a$, $b$, $c$ and $\theta \in \R$ are the parameters to be determined. Regarding the eigenvalues $\{\lambda_1,\ldots, \lambda_n \}$, in both cases we have
\beq \nonumber
\lambda_1\ =\ 0\ ,\quad \lambda_i\ =\ 2 (\cos(\theta_i)-1)\ ,\quad \theta_i\ =\ \frac{i-1}{n}\pi\ ,\quad i=2,\ldots, n\ .
\eeq
Hence, all eigenvalues are distinct. The  eigenfunctions $\{\widehat u_1,\ldots, \widehat u_n \}$ of $\widehat L$ are, for $x \in \{1,\ldots, n \}$ and $i=2,\ldots, n$, 
\beq \nonumber
\widehat u_1(x)\ =\ \frac{1}{\sqrt{n}}\ ,
\eeq
and
\begin{multline*}
\widehat u_i(x)\ =\ \frac{1}{\sqrt{n (1-\cos(\theta_i))}} (-\sin(\theta_i)\cos(\theta_i (x-1)) + (1-\cos(\theta_i)) \sin(\theta_i (x-1)))\ .
\end{multline*}
The eigenfunctions $\{u_1,\ldots, u_n \}$ of $L$ are given, for $x \in \{1,\ldots, n \}$ and $i=2,\ldots, n$, by
\beq \nonumber
u_1(x)\ =\ \frac{n+1-x}{\sqrt{n}}\ ,\quad u_i(x)\ =\ \frac{1}{\sqrt{n(1-\cos(\theta_i))}} \sin(\theta_i (x-1))\ .
\eeq
Hence, we note that:
\begin{enumerate}[label={(\alph*)}]
	\item By Theorem \ref{theorem jordan}, $L$ and $\widehat L$ are dual and any duality function is of the form 
	\beq \label{generaldualityrws}
	D(x,y) &=& \sum_{i=1}^n a_i \widehat u_i(x) u_i(y)\ ,
	\eeq
	for $a_1,\ldots, a_n \in \R$.
	\item By denoting by $\nu$ the counting measure on $\Omega=\{1,\ldots, n \}$,  the generator $\widehat L$ is self-adjoint in $L^2(\nu)$ and is, as a matrix, symmetric, i.e., $\widehat L^\mathsf{T} = \widehat L$. As a consequence,   $\{\widehat u_1,\ldots, \widehat u_n  \}$ are eigenfunctions of both $\widehat L$ and $\widehat L^\mathsf{T}$.
	\item For all $i=1,\ldots, n$, 
	\beq \nonumber
	u_i(x) &=& \sum_{y=x}^n \widehat u_i(y)\ ,\quad x \in \Omega\ ,
	\eeq
	i.e., the eigenfunctions $\{u_1,\ldots, u_n \}$ are related to $\{\widehat u_1,\ldots, \widehat u_n \}$ as in \eqref{eigenfunctionrel}.
	\item The eigenfunctions $\widehat u_1,\ldots, \widehat u_n$ are normalized in $L^2(\nu)$, i.e., for all $i,j =1,\ldots, n$,
	\beq \nonumber
	\langle \widehat u_i, \widehat u_j \rangle_{L^2(\nu)} &=&\delta_{i,j}\ .
	\eeq
\end{enumerate}
As a consequence, by Theorem \ref{theorem siegmund}, for the choice $a_1=\ldots =a_n=1$, the duality function $D(x,y)$ in \eqref{generaldualityrws} is the Siegmund duality function $D_\s(x,y)$ in \eqref{Siegmund duality function}, namely, for all $x, y \in \Omega$, 
\begin{small}
\begin{multline*} \nonumber
 \frac{n+1-y}{n} \\+\ \sum_{i=2}^n \frac{ \sin(\theta_i (y-1))}{n(1-\cos(\theta_i))}\,(-\sin(\theta_i) \cos(\theta_i(x-1))+ (1-\cos(\theta_i)) \sin(\theta_i (x-1)))\\ =\ \1\{x \geq y\}\ .
\end{multline*}
\end{small}

As a final remark, we note that, by adding the cemetery state $\Delta=\{n+1 \}$ accessible at rate $1$ only from the state $\{n \}$, the absorbed sub-Markov random walk associated to $L$ becomes a proper Markov process with $\{1 \}$ and $\{n+1 \}$ as absorbing states. If we denote by $L^\text{ext}$ the generator on the extended space $\Omega \cup \Delta$, it follows that the eigenvalues of $L^\text{ext}$ remain unchanged, while the new eigenfunctions $\{u^\text{ext}_1,\ldots, u^\text{ext}_n, u^\text{ext}_{n+1} \}$ are such that
\beq \nonumber
u^\text{ext}_{n+1}(x) &=& 1\ ,\quad x \in \Omega \cup \Delta\ ,
\eeq
and, for all $i =1,\ldots, n$, 
\beq \nonumber
 u^\text{ext}_i(n+1)\ =\ 0\ ,\quad \quad u^\text{ext}_i(x)\ =\ u_i(x)\ ,\quad x \in \Omega\ .
\eeq
Hence, the function
\beq \nonumber
D^\text{ext}_\s(x,y) &=& \sum_{i=1}^n \widehat u_i(x) u^\text{ext}_i(y)\ ,\quad x \in \Omega\ ,\quad y \in \Omega \cup \Delta\ ,
\eeq
equals $\1\{x \geq y \}$.
\qedsymbol
\end{example}


\subsubsection*{Acknowledgments}
The authors thank  \emph{Institut Henri Poincar\'{e}}, where part of this work was done, for very kind hospitality.
F.S.\ acknowledges
NWO for financial support via the TOP1 grant 613.001.552. The same author is indebted to G.\ Carinci for fruitful discussions.

\end{document}